\newtheorem{theorem}{Theorem}[section]
\newtheorem{lemma}[theorem]{Lemma}
\newtheorem{proposition}[theorem]{Proposition}
\newtheorem{corollary}[theorem]{Corollary}
\theoremstyle{definition}
\newtheorem{definition}[theorem]{Definition}
\newtheorem{example}[theorem]{Example}
\theoremstyle{remark}
\numberwithin{equation}{section}
\begin{document}

\setcounter{page}{1}

\title[Controlled ${\ast}$-operator Frames for $End_{\mathcal{A}}^{\ast}(\mathcal{H})$]{Controlled ${\ast}$ - Operator Frames for $End_{\mathcal{A}}^{\ast}(\mathcal{H})$}

\author[Abdeslam TOURI, Hatim LABRIGUI and , Samir KABBAJ]{Abdeslam TOURI$^1$$^{*}$, Hatim LABRIGUI$^1$ \MakeLowercase{and} Samir KABBAJ$^1$}

\address{$^{1}$Department of Mathematics, University of Ibn Tofail, B.P. 133, Kenitra, Morocco}
\email{\textcolor[rgb]{0.00,0.00,0.84}{touri.abdo68@gmail.com}}

\email{\textcolor[rgb]{0.00,0.00,0.84}{hlabrigui75@gmail.com}}

\email{\textcolor[rgb]{0.00,0.00,0.84}{samkabbaj@yahoo.fr}}

\subjclass[2010]{42C15, 46L06}

\keywords{operator frames, ${\ast}$ operator frames, Controlled ${\ast}$-operator frames, controlled Bessel operator frames, $C^{\ast}$-algebra, Hilbert $\mathcal{A}$-modules.}

\date{
\newline \indent $^{*}$Corresponding author}

\begin{abstract}
	In this paper we
	study the concept of controlled $\ast$-operator frmae for $End_{\mathcal{A}}^{\ast}(\mathcal{H})$. Also we discuss characterizations of controlled $\ast$-operator frames and we give some properties.
\end{abstract} \maketitle
\section{Introduction and Preliminaries}
The concept of frames in Hilbert spaces has been introduced by
Duffin and Schaeffer \cite{Duf} in 1952 to study some deep problems in nonharmonic Fourier series. After the fundamental paper \cite{13} by Daubechies, Grossman and Meyer, frame theory began to be widely used, particularly in the more specialized context of wavelet frames and Gabor frames \cite{Gab}. Frames have been used in signal processing, image processing, data compression and sampling theory. 
 
Controlled frames in Hilbert spaces have been introduced by P. Balazs \cite{01} to improve the numerical efficiency of iterative algorithms for inverting the frame operator.

Controlled frames in $C^{\ast}$-modules were introduced by Rashidi and Rahimi \cite{03}, and the authors showed that they share many useful properties with their corresponding notions in a Hilbert space.  ${\ast}$-operator frmae for $End_{\mathcal{A}}^{\ast}(\mathcal{H})$ has been study by M. Rossafi \cite{haja}.
In this paper we introduce the notion of  controlled ${\ast}$-operator frame for $End_{\mathcal{A}}^{\ast}(\mathcal{H})$ with $\mathcal{H}$ is a Hilbert $C^{\ast}$-modules.

Let $I$ be a countable index set. In this section we briefly recall the definitions and basic properties of $C^{\ast}$-algebra, Hilbert $C^{\ast}$-modules, frame, operator frame in Hilbert $C^{\ast}$-modules. For information about frames in Hilbert spaces we refer to \cite{Ch}. Our references for $C^{\ast}$-algebras are \cite{Dav,Con}. For a $C^{\ast}$-algebra $\mathcal{A}$, an element $a\in\mathcal{A}$ is positive ($a\geq 0$) if $a=a^{\ast}$ and $sp(a)\subset\mathbf{R^{+}}$. $\mathcal{A}^{+}$ denotes the set of positive elements of $\mathcal{A}$.
\begin{definition}
	\cite{BA}. Let $ \mathcal{A} $ be a unital $C^{\ast}$-algebra and $\mathcal{H}$ be a left $ \mathcal{A} $-module, such that the linear structures of $\mathcal{A}$ and $ \mathcal{H} $ are compatible. $\mathcal{H}$ is a pre-Hilbert $\mathcal{A}$-module if $\mathcal{H}$ is equipped with an $\mathcal{A}$-valued inner product $\langle.,.\rangle_{\mathcal{A}} :\mathcal{H}\times\mathcal{H}\rightarrow\mathcal{A}$, such that is sesquilinear, positive definite and respects the module action. In the other words,
	\begin{itemize}
		\item [(i)] $ \langle x,x\rangle_{\mathcal{A}}\geq0 $ for all $ x\in\mathcal{H} $ and $ \langle x,x\rangle_{\mathcal{A}}=0$ if and only if $x=0$.
		\item [(ii)] $\langle ax+y,z\rangle_{\mathcal{A}}=a\langle x,y\rangle_{\mathcal{A}}+\langle y,z\rangle_{\mathcal{A}}$ for all $a\in\mathcal{A}$ and $x,y,z\in\mathcal{H}$.
		\item[(iii)] $ \langle x,y\rangle_{\mathcal{A}}=\langle y,x\rangle_{\mathcal{A}}^{\ast} $ for all $x,y\in\mathcal{H}$.
	\end{itemize}	 
	For $x\in\mathcal{H}, $ we define $||x||=||\langle x,x\rangle_{\mathcal{A}}||^{\frac{1}{2}}$. If $\mathcal{H}$ is complete with $||.||$, it is called a Hilbert $\mathcal{A}$-module or a Hilbert $C^{\ast}$-module over $\mathcal{A}$. For every $a$ in $C^{\ast}$-algebra $\mathcal{A}$, we have $|a|=(a^{\ast}a)^{\frac{1}{2}}$ and the $\mathcal{A}$-valued norm on $\mathcal{H}$ is defined by $|x|=\langle x, x\rangle_{\mathcal{A}}^{\frac{1}{2}}$ for $x\in\mathcal{H}$.
	\begin{example} \cite{Tro}
		If $ \{\mathcal{H}_{k}\}_{k\in\mathbf{N}} $ is a countable set of Hilbert $\mathcal{A}$-modules, then one can define their direct sum $ \oplus_{k\in\mathbb{N}}\mathcal{H}_{k} $. On the $\mathcal{A}$-module $ \oplus_{k\in\mathbb{N}}\mathcal{H}_{k} $ of all sequences $x=(x_{k})_{k\in\mathbb{N}}: x_{k}\in\mathcal{H}_{k}$, such that the series $ \sum_{k\in\mathbb{N}}\langle x_{k}, x_{k}\rangle_{\mathcal{A}} $ is norm-convergent in the $\mathcal{C}^{\ast}$-algebra $\mathcal{A}$, we define the inner product by
		\begin{equation*}
		\langle x, y\rangle:=\sum_{k\in\mathbb{N}}\langle x_{k}, y_{k}\rangle_{\mathcal{A}} 
		\end{equation*}
		for $x, y\in\oplus_{k\in\mathbb{N}}\mathcal{H}_{k} $.
		
		Hence $\oplus_{k\in\mathbb{N}}\mathcal{H}_{k}$ is a Hilbert $\mathcal{A}$-module.
		
		The direct sum of a countable number of copies of a Hilbert $\mathcal{C}^{\ast}$-module $\mathcal{H}$ is denoted by $l^{2}(\mathcal{H})$.
	\end{example}
\end{definition}
	Let $\mathcal{H}$ and $\mathcal{K}$ be two Hilbert $\mathcal{A}$-modules. A map $T:\mathcal{H}\rightarrow\mathcal{K}$ is said to be adjointable if there exists a map $T^{\ast}:\mathcal{K}\rightarrow\mathcal{H}$ such that $\langle Tx,y\rangle_{\mathcal{A}}=\langle x,T^{\ast}y\rangle_{\mathcal{A}}$ for all $x\in\mathcal{H}$ and $y\in\mathcal{K}$.
	 
	We also reserve the notation $End_{\mathcal{A}}^{\ast}(\mathcal{H},\mathcal{K})$ for the set of all adjointable operators from $\mathcal{H}$ to $\mathcal{K}$ and $End_{\mathcal{A}}^{\ast}(\mathcal{H},\mathcal{H})$ is abbreviated to $End_{\mathcal{A}}^{\ast}(\mathcal{H})$.

The following lemmas will be used to prove our mains results
\begin{lemma}\cite{Ali}\label{haja1}
	If $\varphi:\mathcal{A}\longrightarrow\mathcal{B}$ is a $\ast$-homomorphism between $\mathcal{C}^{\ast}$-algebras, then $\varphi$ is increasing, that is, if $a\leq b$, then $\varphi(a)\leq\varphi(b)$.
\end{lemma}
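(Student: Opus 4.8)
The plan is to reduce the monotonicity statement to the single assertion that $\varphi$ preserves positivity, and then to exploit the square-root characterization of positive elements in a $C^{\ast}$-algebra. Since $\varphi$ is linear, for $a\leq b$ we have $\varphi(b)-\varphi(a)=\varphi(b-a)$, so it suffices to prove that $\varphi(c)\geq 0$ whenever $c\geq 0$; recall that the inequality $a\leq b$ means precisely $b-a\in\mathcal{A}^{+}$, so $c:=b-a$ is the positive element we must control.

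First I would recall that an element $c$ of a $C^{\ast}$-algebra is positive if and only if it can be written as $c=d^{\ast}d$ for some $d$. Concretely, when $c\geq 0$ one may take $d=c^{1/2}$, the positive square root furnished by the continuous functional calculus, which is self-adjoint and satisfies $c^{1/2}c^{1/2}=c$. This factorization is the crux of the argument.

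Next I would apply the homomorphism to this factorization. Using that $\varphi$ is multiplicative and $\ast$-preserving, one obtains $\varphi(c)=\varphi(c^{1/2}c^{1/2})=\varphi(c^{1/2})\varphi(c^{1/2})=\varphi(c^{1/2})^{\ast}\varphi(c^{1/2})$, where the final equality uses $\varphi(c^{1/2})^{\ast}=\varphi((c^{1/2})^{\ast})=\varphi(c^{1/2})$, valid because $c^{1/2}$ is self-adjoint. Since every element of the form $x^{\ast}x$ is positive in the target algebra $\mathcal{B}$, we conclude $\varphi(c)\geq 0$. Combining this with the opening reduction yields $\varphi(b)-\varphi(a)=\varphi(b-a)\geq 0$, that is, $\varphi(a)\leq\varphi(b)$.

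The only delicate point is the existence of the positive square root $c^{1/2}$ inside $\mathcal{A}$ together with the fact that $\varphi$ is compatible with it; this rests on the continuous functional calculus and is where the $C^{\ast}$-structure (as opposed to a purely algebraic one) is genuinely used. One could alternatively avoid the square root by arguing through the spectrum, showing that $\varphi$ sends self-adjoint elements to self-adjoint elements and does not enlarge the spectrum, so that $sp(c)\subset\mathbb{R}^{+}$ forces $sp(\varphi(c))\subset\mathbb{R}^{+}$; but the factorization argument above is shorter and entirely self-contained.
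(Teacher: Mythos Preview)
Your proof is correct and follows the standard route: reduce monotonicity to positivity preservation via linearity, factor a positive element as $c=(c^{1/2})^{\ast}c^{1/2}$ using the functional calculus, and then push the factorization through the $\ast$-homomorphism to obtain $\varphi(c)=\varphi(c^{1/2})^{\ast}\varphi(c^{1/2})\geq 0$. There is nothing to compare against in the paper itself, since Lemma~\ref{haja1} is quoted from \cite{Ali} without proof; your argument would serve as a complete, self-contained justification of the cited result.
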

\begin{lemma}\label{haja2}
	\cite{Ali}. Let $\mathcal{H}$ and $\mathcal{K}$ be two Hilbert $\mathcal{A}$-modules and $T\in End_{\mathcal{A}}^{\ast}(\mathcal{H},\mathcal{K})$.
	\begin{itemize}
		\item [(i)] If $T$ is injective and $T$ has closed range, then the adjointable map $T^{\ast}T$ is invertible and $$\|(T^{\ast}T)^{-1}\|^{-1}I_{\mathcal{H}}\leq T^{\ast}T\leq\|T\|^{2}I_{\mathcal{H}}.$$
		\item  [(ii)]	If $T$ is surjective, then the adjointable map $TT^{\ast}$ is invertible and $$\|(TT^{\ast})^{-1}\|^{-1}I_{\mathcal{K}}\leq TT^{\ast}\leq\|T\|^{2}I_{\mathcal{K}}.$$
\end{itemize}	
\end{lemma}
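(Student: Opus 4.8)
The plan is to prove (i) first and then obtain (ii) by applying (i) to $T^{\ast}$. For the upper bound in (i) I would argue pointwise: for every $x\in\mathcal{H}$ we have $\langle T^{\ast}Tx,x\rangle_{\mathcal{A}}=\langle Tx,Tx\rangle_{\mathcal{A}}$, and the standard operator inequality on Hilbert $C^{\ast}$-modules gives $\langle Tx,Tx\rangle_{\mathcal{A}}\leq\|T\|^{2}\langle x,x\rangle_{\mathcal{A}}$. As $x$ is arbitrary, this is exactly $T^{\ast}T\leq\|T\|^{2}I_{\mathcal{H}}$.

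The core of the argument is the invertibility of $T^{\ast}T$ in the $C^{\ast}$-algebra $End_{\mathcal{A}}^{\ast}(\mathcal{H})$. First, $T^{\ast}T$ is injective: if $T^{\ast}Tx=0$, then $\langle Tx,Tx\rangle_{\mathcal{A}}=\langle T^{\ast}Tx,x\rangle_{\mathcal{A}}=0$, so $Tx=0$, and injectivity of $T$ forces $x=0$. For surjectivity I would invoke the structure theory of adjointable operators with closed range: when $T$ has closed range, both $\ker T$ and $\mathrm{Ran}(T)$ are orthogonally complemented, $T^{\ast}$ also has closed range, and $\mathrm{Ran}(T^{\ast}T)=\mathrm{Ran}(T^{\ast})=(\ker T)^{\perp}$. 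Since $T$ is injective, $(\ker T)^{\perp}=\mathcal{H}$, so $T^{\ast}T$ is onto. A bijective adjointable operator has a bounded adjointable inverse, hence $T^{\ast}T$ is invertible.

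For the lower bound I would pass to spectral theory. Writing $S:=T^{\ast}T$, the element $S$ is positive and invertible, so $sp(S)\subset[\,\|S^{-1}\|^{-1},\|S\|\,]$: indeed $\|S\|=\max sp(S)$, while $\|S^{-1}\|=\max sp(S^{-1})=(\min sp(S))^{-1}$, whence $\min sp(S)=\|S^{-1}\|^{-1}$. Continuous functional calculus then gives $\|S^{-1}\|^{-1}I_{\mathcal{H}}\leq S$, and since $\|S\|=\|T^{\ast}T\|=\|T\|^{2}$ this finishes (i). Finally, if $T$ is surjective then its range $\mathcal{K}$ is closed and $\ker T^{\ast}=\mathrm{Ran}(T)^{\perp}=\{0\}$, so $T^{\ast}$ is injective with closed range; applying (i) to $T^{\ast}$ and using $(T^{\ast})^{\ast}=T$ yields the invertibility of $TT^{\ast}$ together with the bounds in (ii).

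The main obstacle is the surjectivity of $T^{\ast}T$. Unlike the Hilbert-space setting, one cannot simply read off the operator lower bound from a norm estimate $\|Tx\|\geq m\|x\|$, because in a Hilbert $C^{\ast}$-module such a norm bound does not in general produce an $\mathcal{A}$-valued inequality $\langle Tx,Tx\rangle_{\mathcal{A}}\geq m^{2}\langle x,x\rangle_{\mathcal{A}}$. This is exactly why I route the argument through complementability of closed ranges and the spectral description of a positive invertible element, rather than through a direct bounded-below estimate.
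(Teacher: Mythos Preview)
The paper does not supply a proof of this lemma; it is quoted verbatim from \cite{Ali} as a preliminary result, so there is no ``paper's own proof'' to compare against. What remains is to check your argument on its own merits.

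Your proof is correct. The upper bound is exactly Lemma~\ref{haja3} (Paschke), and the passage from (i) to (ii) via $T^{\ast}$ is the standard duality. The only place requiring care is the surjectivity of $T^{\ast}T$, and you handle it properly: you do not try to extract an $\mathcal{A}$-valued lower bound from a norm estimate (which, as you note, fails in general for Hilbert $C^{\ast}$-modules), but instead appeal to the closed-range theorem for adjointable operators. That theorem (e.g.\ Lance, \emph{Hilbert $C^{\ast}$-modules}, Theorem~3.2) guarantees that when $T$ has closed range, both $\ker T$ and $\mathrm{Ran}(T)$ are orthogonally complemented and $\mathrm{Ran}(T^{\ast})=(\ker T)^{\perp}$; your identity $\mathrm{Ran}(T^{\ast}T)=\mathrm{Ran}(T^{\ast})$ then follows by decomposing any $z\in\mathcal{K}$ as $z_{1}+z_{2}$ with $z_{1}\in\mathrm{Ran}(T)$ and $z_{2}\in\ker T^{\ast}$. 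Once $T^{\ast}T$ is known to be bijective and adjointable, the open mapping theorem gives a bounded inverse, and your spectral argument for $\min sp(S)=\|S^{-1}\|^{-1}$ in the unital $C^{\ast}$-algebra $End_{\mathcal{A}}^{\ast}(\mathcal{H})$ is the clean way to obtain the lower bound.
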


\begin{lemma} \label{haja3} \cite{Pas}.
	Let $\mathcal{H}$ be Hilbert $\mathcal{A}$-module. If $T\in End_{\mathcal{A}}^{\ast}(\mathcal{H})$, then $$\langle Tx,Tx\rangle _{\mathcal{A}} \leq\|T\|^{2}\langle x,x\rangle _{\mathcal{A}}  , x\in\mathcal{H}.$$
\end{lemma}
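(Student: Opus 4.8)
The plan is to reduce the module-valued inequality to a $C^{\ast}$-algebraic positivity statement about the operator $T^{\ast}T$ inside the $C^{\ast}$-algebra $End_{\mathcal{A}}^{\ast}(\mathcal{H})$. First I would note that since $T$ is adjointable, the operator $T^{\ast}T$ lies in $End_{\mathcal{A}}^{\ast}(\mathcal{H})$ and is a positive element of this $C^{\ast}$-algebra, being self-adjoint and of the form $R^{\ast}R$. By the $C^{\ast}$-identity we also have $\|T^{\ast}T\|=\|T\|^{2}$.

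Next I would invoke the elementary $C^{\ast}$-algebra fact that every positive element $a$ satisfies $a\leq\|a\|\cdot 1$. Applied to $a=T^{\ast}T$ this yields the operator inequality
\begin{equation*}
T^{\ast}T\leq\|T\|^{2}I_{\mathcal{H}},
\end{equation*}
that is, $\|T\|^{2}I_{\mathcal{H}}-T^{\ast}T$ is a positive operator in $End_{\mathcal{A}}^{\ast}(\mathcal{H})$.

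It remains to pass from operator positivity to the asserted inequality in $\mathcal{A}$. For a positive $S\in End_{\mathcal{A}}^{\ast}(\mathcal{H})$ one has $\langle Sx,x\rangle_{\mathcal{A}}\geq 0$ for every $x$, since writing $S=R^{\ast}R$ gives $\langle Sx,x\rangle_{\mathcal{A}}=\langle Rx,Rx\rangle_{\mathcal{A}}\geq 0$ by positive-definiteness of the inner product. Taking $S=\|T\|^{2}I_{\mathcal{H}}-T^{\ast}T$ and using the adjoint relation $\langle T^{\ast}Tx,x\rangle_{\mathcal{A}}=\langle Tx,Tx\rangle_{\mathcal{A}}$ then produces exactly $\langle Tx,Tx\rangle_{\mathcal{A}}\leq\|T\|^{2}\langle x,x\rangle_{\mathcal{A}}$. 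The main obstacle is precisely this last translation step: one must be sure that positivity of $S$ as an element of the $C^{\ast}$-algebra $End_{\mathcal{A}}^{\ast}(\mathcal{H})$ really does force the pointwise $\mathcal{A}$-valued inequality $\langle Sx,x\rangle_{\mathcal{A}}\geq 0$, which in the module setting rests on the existence of the factorization $S=R^{\ast}R$ (equivalently, of a positive square root) rather than on any Hilbert-space argument.
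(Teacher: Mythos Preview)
Your argument is correct and is essentially the standard proof of this inequality. Note, however, that the paper does not supply its own proof of this lemma: it is stated with a citation to Paschke \cite{Pas} and used as a black box. So there is no ``paper's own proof'' to compare against beyond the original reference.

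For the record, the route you take---use $T^{\ast}T\leq\|T^{\ast}T\|\,I_{\mathcal{H}}=\|T\|^{2}I_{\mathcal{H}}$ in the $C^{\ast}$-algebra $End_{\mathcal{A}}^{\ast}(\mathcal{H})$, then convert operator positivity of $\|T\|^{2}I_{\mathcal{H}}-T^{\ast}T$ into the $\mathcal{A}$-valued inequality via a square-root factorization---is exactly how Paschke proves it. Your caution about the translation step is well placed but the resolution you give is the right one: positivity of $S$ in the $C^{\ast}$-algebra guarantees a positive square root $R=S^{1/2}\in End_{\mathcal{A}}^{\ast}(\mathcal{H})$, whence $\langle Sx,x\rangle_{\mathcal{A}}=\langle Rx,Rx\rangle_{\mathcal{A}}\geq 0$.
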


\begin{lemma} \label{haja4} \cite{Ara}.
	Let $\mathcal{H}$ and $\mathcal{K}$ two Hilbert $\mathcal{A}$-modules and $T\in End^{\ast}(\mathcal{H},\mathcal{K})$. Then the following statements are equivalent:
	\begin{itemize}
		\item [(i)] $T$ is surjective.
		\item [(ii)] $T^{\ast}$ is bounded below with respect to norm, i.e., there is $m>0$ such that $\|T^{\ast}x\|\geq m\|x\|$ for all $x\in\mathcal{K}$.
		\item [(iii)] $T^{\ast}$ is bounded below with respect to the inner product, i.e., there is $m'>0$ such that $\langle T^{\ast}x,T^{\ast}x\rangle _{\mathcal{A}} \geq m'\langle x,x\rangle _{\mathcal{A}} $ for all $x\in\mathcal{K}$.
	\end{itemize}
\end{lemma}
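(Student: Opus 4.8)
The plan is to prove the three equivalences cyclically, in the order (i) $\Rightarrow$ (iii) $\Rightarrow$ (ii) $\Rightarrow$ (i). The reason for this ordering is that the direct passage (ii) $\Rightarrow$ (iii), from a $C^{\ast}$-norm lower bound to an $\mathcal{A}$-valued inner-product (operator) lower bound, is genuinely delicate, since $\|a\|\geq\|b\|$ does not in general give $a\geq b$ in a $C^{\ast}$-algebra. Routing that implication through surjectivity sidesteps the difficulty entirely.

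For (i) $\Rightarrow$ (iii), I would invoke Lemma \ref{haja2}(ii): since $T$ is surjective, $TT^{\ast}$ is invertible and $\|(TT^{\ast})^{-1}\|^{-1}I_{\mathcal{K}}\leq TT^{\ast}$. Using the adjoint relation one has $\langle T^{\ast}x,T^{\ast}x\rangle_{\mathcal{A}}=\langle TT^{\ast}x,x\rangle_{\mathcal{A}}$ for every $x\in\mathcal{K}$, and the operator inequality then yields $\langle T^{\ast}x,T^{\ast}x\rangle_{\mathcal{A}}\geq\|(TT^{\ast})^{-1}\|^{-1}\langle x,x\rangle_{\mathcal{A}}$, which is (iii) with $m'=\|(TT^{\ast})^{-1}\|^{-1}$. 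For (iii) $\Rightarrow$ (ii) I would simply take $C^{\ast}$-norms on both sides of the inner-product inequality; since the norm is monotone on positive elements of $\mathcal{A}$, this gives $\|T^{\ast}x\|^{2}=\|\langle T^{\ast}x,T^{\ast}x\rangle_{\mathcal{A}}\|\geq m'\|\langle x,x\rangle_{\mathcal{A}}\|=m'\|x\|^{2}$, so (ii) holds with $m=\sqrt{m'}$.

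The substantive step is (ii) $\Rightarrow$ (i). First I would observe that the norm lower bound forces $T^{\ast}$ to be injective (immediate, since $T^{\ast}x=0$ gives $\|x\|=0$) and to have closed range: if $T^{\ast}x_{n}$ converges, then the inequality $\|T^{\ast}(x_{n}-x_{m})\|\geq m\|x_{n}-x_{m}\|$ makes $(x_{n})$ Cauchy, hence convergent by completeness of $\mathcal{K}$, and continuity of $T^{\ast}$ identifies the limit as a point of $\operatorname{ran}(T^{\ast})$. Thus $T^{\ast}$ is injective with closed range, so Lemma \ref{haja2}(i) applied to $T^{\ast}$ (whose adjoint is $T$) shows that $(T^{\ast})^{\ast}T^{\ast}=TT^{\ast}$ is invertible. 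Finally, invertibility of $TT^{\ast}$ yields surjectivity of $T$, since every $y\in\mathcal{K}$ can be written $y=T\bigl(T^{\ast}(TT^{\ast})^{-1}y\bigr)$.

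The only place demanding real care is this last direction, and the key idea there is to recognize that Lemma \ref{haja2}(i), applied not to $T$ but to $T^{\ast}$, is precisely what converts the ``bounded below'' hypothesis on $T^{\ast}$ into invertibility of $TT^{\ast}$; everything else is routine. I expect the closed-range verification to be the only technical point worth writing out in full, the remaining implications following directly from the cited lemmas and the monotonicity of the $C^{\ast}$-norm.
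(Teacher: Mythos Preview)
The paper does not prove this lemma; it is quoted from \cite{Ara} and used as a black box, so there is no in-paper argument to compare against. Your cyclic proof (i) $\Rightarrow$ (iii) $\Rightarrow$ (ii) $\Rightarrow$ (i) is correct as written: the first implication is exactly Lemma~\ref{haja2}(ii) read at the level of inner products, the second is monotonicity of the $C^{\ast}$-norm on positives, and for the third your closed-range argument is the standard one and the appeal to Lemma~\ref{haja2}(i) with $T^{\ast}$ in place of $T$ cleanly yields invertibility of $TT^{\ast}$ and hence surjectivity of $T$. Your remark that routing (ii) $\Rightarrow$ (iii) through (i) avoids the nontrivial passage from a norm inequality to an operator inequality is well taken; that is indeed the point where a direct argument would require additional $C^{\ast}$-algebra input (essentially that bounded-below adjointable maps have complemented range in the Hilbert-module setting), and your organization sidesteps it using only the two lemmas already available in the paper.
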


\section{Controlled ${\ast}$-operator frame for $End_{\mathcal{A}}^{\ast}(\mathcal{H})$}
We begin this section with the following definition.
\begin{definition}\cite{haja}
	A family of adjointable operators $\{T_{i}\}_{i\in I}$ on a Hilbert $\mathcal{A}$-module $\mathcal{H}$ over a unital $C^{\ast}$-algebra is said to be an operator frame for $End_{\mathcal{A}}^{\ast}(\mathcal{H})$, if there exist two positives constants $A, B > 0$ such that 
\begin{equation}\label{eq3}
		A\langle x,x\rangle _{\mathcal{A}} \leq\sum_{i\in I}\langle T_{i}x,T_{i}x\rangle\leq B\langle x,x\rangle_{\mathcal{A}} ,  x\in\mathcal{H}.
	\end{equation}
	The numbers $A$ and $B$ are called lower and upper bound of the operator frame, respectively. If $A=B=\lambda$, the operator frame is $\lambda$-tight.\\
	 If $A = B = 1$, it is called a normalized tight operator frame or a Parseval operator frame.\\
	 If only upper inequality of \eqref{eq3} hold, then $\{T_{i}\}_{i\in I}$ is called an operator Bessel sequence for $End_{\mathcal{A}}^{\ast}(\mathcal{H})$.\\	
	If the sum in the middle of \eqref{eq3} is convergent in norm, the operator frame is called standard.
\end{definition}
Throughout the paper, series like \eqref{eq3} are assumed to be convergent in the norm sense.\\
Let $GL^{+}(\mathcal{H})$ be the set for all positive bounded linear invertible operators on $\mathcal{H}$ with bounded inverse.

\begin{definition}\cite{meknes}
	Let $C,C^{'} \in GL^{+}(H)$, a family of adjointable operators $\{T_{i}\}_{i\in I}$ on a Hilbert $\mathcal{A}$-module $\mathcal{H}$ over a unital $C^{\ast}$-algebra is said to be a $(C,C^{'})$-controlled operator frame for $End_{\mathcal{A}}^{\ast}(\mathcal{H})$, if there exist two positives constants $A,B > 0$ such that 
	\begin{equation}\label{1}
		A\langle x,x\rangle _{\mathcal{A}} \leq\sum_{i\in I}\langle T_{i}Cx,T_{i}C^{'}x\rangle_{\mathcal{A}} \leq B\langle x,x\rangle_{\mathcal{A}} , \,\,\,\, x\in\mathcal{H}.
	\end{equation}
	The elements $A$ and $B$ are called lower and upper bounds of the $(C,C^{'})$-controlled operator frame , respectively.\\
	 If $A=B=\lambda$, the $(C,C^{'})$-controlled operator frame  is $\lambda$-tight.\\
	 If $A = B = 1$, it is called a normalized tight $(C,C^{'})$-controlled operator frame or a Parseval $(C,C^{'})$-controlled operator frame .\\
	 If only upper inequality of \eqref{1} hold, then $\{T_{i}\}_{i\in i}$ is called a $(C,C^{'})$-controlled operator Bessel sequence for $End_{\mathcal{A}}^{\ast}(\mathcal{H})$.
\end{definition}
\begin{definition}\cite{haja}
	A family of adjointable operators $\{T_{i}\}_{i\in I}$ on a Hilbert $\mathcal{A}$-module $\mathcal{H}$ over a unital $C^{\ast}$-algebra is said to be a ${\ast}$ - operator frame for $End_{\mathcal{A}}^{\ast}(\mathcal{H})$, if there exist two positives constants $A$ and $B$ in $\mathcal{A}$ such that 
	\begin{equation}\label{kamos}
	A\langle x,x\rangle_{\mathcal{A}} \leq\sum_{i\in I}\langle T_{i}x,T_{i}x\rangle_{\mathcal{A}} \leq B\langle x,x\rangle_{\mathcal{A}} , , \,\,\,\, x\in\mathcal{H}.
	\end{equation}
	The elements $A$ and $B$ are called lower and upper bounds of the ${\ast}$-operator frame, respectively. If $A=B=\lambda$, the ${\ast}$-operator frame is $\lambda$-tight. If $A = B = 1_{\mathcal{A}}$, it is called a normalized tight ${\ast}$-operator frame or a Parseval ${\ast}$-operator frame. If only upper inequality of (\ref{kamos}) hold, then $\{T_{i}\}_{i\in I}$is called an ${\ast}$-operator Bessel sequence for $End_{\mathcal{A}}^{\ast}(\mathcal{H})$.
	
\end{definition}
\begin{definition}
	Let $C,C^{'} \in GL^{+}(H)$, a family of adjointable operators $\{T_{i}\}_{i\in I}$ on a Hilbert $\mathcal{A}$-module $\mathcal{H}$ over a unital $C^{\ast}$-algebra is said to be an $(C,C^{'})$-controlled $\ast$- operator frame for $End_{\mathcal{A}}^{\ast}(\mathcal{H})$, if there exist two positives constants $A$ and $B$ in $\mathcal{A}$ such that 
	\begin{equation}\label{1}
	A\langle x,x\rangle _{\mathcal{A}} A^{\ast} \leq\sum_{i\in I}\langle T_{i}Cx,T_{i}C^{'}x\rangle\leq B\langle x,x\rangle_{\mathcal{A}} B^{\ast},  x\in\mathcal{H}.
	\end{equation}
	The elements $A$ and $B$ are called lower and upper bounds of the $(C,C^{'})$-controlled $\ast$-operator frame , respectively.\\
	If $A=B=\lambda$, the $(C,C^{'})$-controlled operator frame  is $\lambda$-tight.\\
	If $A = B = 1_{\mathcal{A}}$, it is called a normalized tight $(C,C^{'})$-controlled $\ast$-operator frame or a Parseval $(C,C^{'})$-controlled $\ast$- operator frame .\\
	If only upper inequality of \eqref{1} hold, then $\{T_{i}\}_{i\in i}$ is called an $(C,C^{'})$-controlled $\ast$-operator Bessel sequence for $End_{\mathcal{A}}^{\ast}(\mathcal{H})$.
\end{definition}
\begin{example}
	Let $\mathcal{A}=l^{\infty}$ be the unitary $C^{\ast}$-algebra of all bounded complex-valued sequences and let $\mathcal{H}=C_0$ the set of all sequences converging to zero equipped with the $\mathcal{A}=l^{\infty}$-inner product:
	\begin{equation*}
	\langle x,y\rangle_{\mathcal{A}} =\langle (x_{i})_{i\in \mathbb{N}} , (y_{i})_{i\in \mathbb{N}}\rangle_{\mathcal{A}}=(x_{i}\bar{y_{i}})_{i\in \mathbb{N}},\quad for \; all \quad x=(x_{i})_{i\in \mathbb{N}}, y=(y_{i})_{i\in \mathbb{N}} \in \mathcal{H}
	\end{equation*}
	It's clear to see that $\mathcal{H}$ is a Hilbert $C^{\ast}$-module over $\mathcal{A}=l^{\infty}$.
	
	Let $j\in \mathbb{N}$ and $(a_{i})_{i\in \mathbb{N}}=(1+\frac{1}{i})_{i\in \mathbb{N}}$, we define $T_{j} \in End_{\mathcal{A}}^{\ast}(\mathcal{H}) $ by:
	\begin{equation*}
T_{j}((x_{i})_{i\in \mathbb{N}})=(\delta_{ij}a_{j}x_{j})_{i\in \mathbb{N}}\qquad (x_{i})_{i\in \mathbb{N}}\in \mathcal{H}
	\end{equation*}
	Let $\alpha \in \mathbb{R}^{\ast}$, we define two operators $C$ and $C^{'}$ on $\mathcal{H}$ by:
	\begin{align*}
	C:\mathcal{H} &\longrightarrow \mathcal{H}\\
	x&\longrightarrow Cx=\alpha x
	\end{align*}
	
	\begin{align*}
	C{'}:\mathcal{H} &\longrightarrow \mathcal{H}\\
	x&\longrightarrow C^{'}x=\frac{1}{\alpha} x
	\end{align*}
	We have,
\begin{align*}
    \sum_{j\in \mathbb{N}}\langle T_{j}Cx,T_{j}C^{'}x\rangle&=\langle (\delta_{ij}a_j  \alpha x_j)_{i\in \mathbb{N}}, (\delta_{ij}a_j \frac{1}{\alpha}x_j)_{i\in \mathbb{N}}\rangle_{\mathcal{A}}\\
    &= ((1+\frac{1}{i} )^2 x_i \bar{x_i})_{i\in \mathbb{N}}\\
    &=(1+\frac{1}{i} )_{i\in \mathbb{N}} \langle x, x \rangle_{\mathcal{A}} (1+\frac{1}{i} )_{i\in \mathbb{N}}.
\end{align*}
    Therefore $(T_{j})_{j\in \mathbb{N}}$ is a $(C,C^{'})$-controlled tight $\ast$-operator frame for $End_{\mathcal{A}}^{\ast}(\mathcal{H})$.

\end{example}
\begin{proposition}
	Every $(C,C^{'})$-controlled operator frame for $End_{\mathcal{A}}^{\ast}(\mathcal{H})$ is a  $(C,C^{'})$-controlled $\ast$- operator frame.
	
\end{proposition}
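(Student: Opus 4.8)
The plan is to observe that the only difference between the two notions is the \emph{type} of the frame bounds: a $(C,C')$-controlled operator frame has scalar bounds $A,B>0$ appearing as left multipliers $A\langle x,x\rangle_{\mathcal{A}}$, whereas a $(C,C')$-controlled $\ast$-operator frame has bounds $A,B\in\mathcal{A}$ appearing in the sandwiched form $A\langle x,x\rangle_{\mathcal{A}}A^{\ast}$. So the whole task reduces to promoting the scalars to positive elements of the unital algebra $\mathcal{A}$ in such a way that the sandwich reproduces the original scalar multiplication, and then reading off the same inequality.

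Concretely, suppose $\{T_i\}_{i\in I}$ is a $(C,C')$-controlled operator frame with scalar bounds $A,B>0$, so that
\[
A\langle x,x\rangle_{\mathcal{A}}\leq\sum_{i\in I}\langle T_iCx,T_iC'x\rangle_{\mathcal{A}}\leq B\langle x,x\rangle_{\mathcal{A}},\qquad x\in\mathcal{H}.
\]
First I would set $\widetilde{A}=\sqrt{A}\,1_{\mathcal{A}}$ and $\widetilde{B}=\sqrt{B}\,1_{\mathcal{A}}$. Since $A,B$ are positive reals, $\sqrt{A},\sqrt{B}$ are well-defined positive reals and $\widetilde{A},\widetilde{B}$ are positive elements of $\mathcal{A}$, being nonnegative scalar multiples of the unit (each has spectrum a single nonnegative point). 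Next I would compute the sandwich expressions: because $1_{\mathcal{A}}$ is the self-adjoint unit and scalars may be pulled through, we have $\widetilde{A}^{\ast}=\sqrt{A}\,1_{\mathcal{A}}$ and
\[
\widetilde{A}\langle x,x\rangle_{\mathcal{A}}\widetilde{A}^{\ast}=\sqrt{A}\,1_{\mathcal{A}}\,\langle x,x\rangle_{\mathcal{A}}\,\sqrt{A}\,1_{\mathcal{A}}=A\langle x,x\rangle_{\mathcal{A}},
\]
and likewise $\widetilde{B}\langle x,x\rangle_{\mathcal{A}}\widetilde{B}^{\ast}=B\langle x,x\rangle_{\mathcal{A}}$. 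Substituting these two identities into the displayed scalar inequality yields
\[
\widetilde{A}\langle x,x\rangle_{\mathcal{A}}\widetilde{A}^{\ast}\leq\sum_{i\in I}\langle T_iCx,T_iC'x\rangle\leq\widetilde{B}\langle x,x\rangle_{\mathcal{A}}\widetilde{B}^{\ast},\qquad x\in\mathcal{H},
\]
which is exactly the defining inequality of a $(C,C')$-controlled $\ast$-operator frame with bounds $\widetilde{A},\widetilde{B}\in\mathcal{A}$.

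The argument is essentially bookkeeping, and I do not anticipate any serious obstacle. The only point deserving care is the third step, where one must confirm that $\sqrt{A}\,1_{\mathcal{A}}$ is a genuine positive element of $\mathcal{A}$ and that conjugating $\langle x,x\rangle_{\mathcal{A}}$ by a scalar multiple of the unit recovers ordinary scalar multiplication; both facts are immediate from the unital $C^{\ast}$-algebra structure. The entire content of the proposition is thus that scalar frame bounds are the special case of algebra-valued bounds given by scalar multiples of $1_{\mathcal{A}}$, so the inclusion is automatic.
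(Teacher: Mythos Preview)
Your proof is correct and follows exactly the same approach as the paper's own proof: replace the scalar bounds $A,B>0$ by $\sqrt{A}\,1_{\mathcal{A}}$ and $\sqrt{B}\,1_{\mathcal{A}}$ and observe that the sandwich $\widetilde{A}\langle x,x\rangle_{\mathcal{A}}\widetilde{A}^{\ast}$ reproduces $A\langle x,x\rangle_{\mathcal{A}}$. Your write-up is in fact slightly more careful than the paper's, which contains a typo (it opens by assuming a $(C,C')$-controlled $\ast$-operator frame rather than a $(C,C')$-controlled operator frame).
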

\begin{proof}
	Let $\{T_{i}\}_{i\in I}$ be a $(C,C^{'})$-controlled $\ast$- operator frame for $End_{\mathcal{A}}^{\ast}(\mathcal{H})$.\\
	Then, there exist two positives constants $A,B > 0$ such that 
	\begin{equation}
	A\langle x,x\rangle _{\mathcal{A}} \leq\sum_{i\in I}\langle T_{i}Cx,T_{i}C^{'}x\rangle_{\mathcal{A}} \leq B\langle x,x\rangle_{\mathcal{A}} , \,\,\,\, x\in\mathcal{H}.
	\end{equation}
	Hence 
	\begin{equation}
	(\sqrt{A}) 1_{\mathcal{A}}\langle x,x\rangle _{\mathcal{A}}((\sqrt{A}) 1_{\mathcal{A}})^{\ast} \leq\sum_{i\in I}\langle T_{i}Cx,T_{i}C^{'}x\rangle_{\mathcal{A}} \leq (\sqrt{B}) 1_{\mathcal{A}}\langle x,x\rangle_{\mathcal{A}}((\sqrt{B}) 1_{\mathcal{A}})^{\ast} , \,\,\,\, x\in\mathcal{H}.
	\end{equation}
	Therfore $\{T_{i}\}_{i\in I}$ is a $(C,C^{'})$-controlled $\ast$- operator frame for $End_{\mathcal{A}}^{\ast}(\mathcal{H})$ with bounds $(\sqrt{A}) 1_{\mathcal{A}}$ and $(\sqrt{B}) 1_{\mathcal{A}}$.
	
\end{proof}
Let $\{T_{i}\}_{i\in I}$  be a $(C,C^{'})$-controlled operator frame for $ End_{\mathcal{A}}^{\ast}(\mathcal{H})$.\\

The bounded linear operator  $T_{CC^{'}}:l^{2}(\mathcal{H}) \longrightarrow \mathcal{H}$ given by
\begin{equation*}
T_{(C,C^{'})}(\{y_{i}\}_{i\in I})=\sum_{i\in I}(CC^{'})^{\frac {1}{2}}T^{\ast}_{i}y_{i} , \{y_{i}\}_{i\in I} \in l^{2}(\mathcal{H})
\end{equation*}
is called the synthesis operator for the  $(C,C^{'})$-controlled $\ast$-operator frame $\{T_{i}\}_{i\in I}$.\\
The adjoint operator $T^{\ast}_{(C,C^{'})}: \mathcal{H}\rightarrow l^{2}(\{\mathcal{H}\})$ given by
\begin{equation}\label{2}
T^{\ast}_{(C,C^{'})}(x)=\{T_{i}(C^{'}C)^{\frac{1}{2}}x\}_{i \in  I} , x\in \mathcal{H}
\end{equation}
is called the analysis operator for the  $(C,C^{'})$-controlled $\ast$-operator frame $\{T_{i}\}_{i\in I}$ .\\
When $C$ and $C^{'}$ commute with each other, and commute with the operator $T^{\ast}_{i}T_{i}$ for each $i\in I$, then the $(C,C^{'})$-controlled frames operator: \\

$S_{(C,C^{'})}:\mathcal{H}\longrightarrow \mathcal{H}$ is defined as: 
$S_{(C,C^{'})}x=T_{(C,C^{'})}T^{\ast}_{(C,C^{'})}x=\sum_{i\in I}C^{'}T^{\ast}_{i}T_{i}Cx $\\
From now on we assume that $C$ and $C^{'}$ commute with each other, and commute with the operator $T^{\ast}_{i}T_{i}$ for each $i \in I$.
\begin{proposition}\label{P1}
	The $(C,C^{'})$-controlled frame operator $
	S_{(C,C^{'})}$ is bounded, positive, sefladjoint and invertible.
\end{proposition}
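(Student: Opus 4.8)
The plan is to verify the four properties in turn, exploiting throughout the standing assumption that $C$, $C'$ and each $T_i^{\ast}T_i$ mutually commute. First I would record the identity that turns the frame inequality into a statement about $S_{(C,C')}$. Since $C'\in GL^{+}(\mathcal{H})$ is positive and hence self-adjoint, for every $x\in\mathcal{H}$ one has $\langle T_iCx, T_iC'x\rangle_{\mathcal{A}} = \langle C'T_i^{\ast}T_iCx, x\rangle_{\mathcal{A}}$, so summing gives $\langle S_{(C,C')}x, x\rangle_{\mathcal{A}} = \sum_{i\in I}\langle T_iCx, T_iC'x\rangle_{\mathcal{A}}$. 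The controlled frame bounds then read $A\langle x,x\rangle_{\mathcal{A}} \leq \langle S_{(C,C')}x, x\rangle_{\mathcal{A}} \leq B\langle x,x\rangle_{\mathcal{A}}$, i.e. $AI_{\mathcal{H}} \leq S_{(C,C')} \leq BI_{\mathcal{H}}$, which will drive the positivity and invertibility arguments. Boundedness is then immediate once I write $S_{(C,C')} = T_{(C,C')}T^{\ast}_{(C,C')}$ as a composition of the bounded synthesis and analysis operators; alternatively the upper estimate gives $\|S_{(C,C')}\| \leq B$ directly, once $S_{(C,C')}$ is known to be positive and self-adjoint.

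For self-adjointness I would take adjoints term by term: $S_{(C,C')}^{\ast} = \sum_i C^{\ast}(T_i^{\ast}T_i)^{\ast}(C')^{\ast} = \sum_i C T_i^{\ast}T_i C'$, using that $C$, $C'$ and $T_i^{\ast}T_i$ are self-adjoint. The commutativity hypotheses then let me move the factors past one another, $C T_i^{\ast}T_i C' = T_i^{\ast}T_i CC' = T_i^{\ast}T_i C'C = C' T_i^{\ast}T_i C$, so that $S_{(C,C')}^{\ast} = S_{(C,C')}$. For positivity the same commutativity lets me symmetrize each summand as $C'T_i^{\ast}T_iC = (CC')^{1/2}T_i^{\ast}T_i(CC')^{1/2}$, whence $\langle S_{(C,C')}x, x\rangle_{\mathcal{A}} = \sum_i \langle T_i(CC')^{1/2}x, T_i(CC')^{1/2}x\rangle_{\mathcal{A}} \geq 0$, each term being of the form $\langle y,y\rangle_{\mathcal{A}} \geq 0$.

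Finally, invertibility is where the real work lies. From $AI_{\mathcal{H}} \leq S_{(C,C')} \leq BI_{\mathcal{H}}$ with $A,B>0$ I would pass to $0 \leq I_{\mathcal{H}} - B^{-1}S_{(C,C')} \leq (1 - AB^{-1})I_{\mathcal{H}}$, so that the positive self-adjoint operator $I_{\mathcal{H}} - B^{-1}S_{(C,C')}$ has norm at most $1 - AB^{-1} < 1$. A Neumann-series argument then shows that $B^{-1}S_{(C,C')}$, and therefore $S_{(C,C')}$ itself, is invertible in $End_{\mathcal{A}}^{\ast}(\mathcal{H})$. I expect this last step to be the main obstacle, since it requires translating the order bounds into an operator-norm estimate (legitimate precisely because $S_{(C,C')}$ is positive and self-adjoint) before the Neumann series applies; the self-adjointness and positivity established above are exactly what make the passage valid.
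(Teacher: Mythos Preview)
Your overall strategy mirrors the paper's: establish the identity $\langle S_{(C,C')}x,x\rangle_{\mathcal{A}}=\sum_i\langle T_iCx,T_iC'x\rangle_{\mathcal{A}}$, obtain self-adjointness by computing $S_{(C,C')}^{\ast}=S_{(C',C)}$ and invoking the commutativity hypotheses, and deduce invertibility from two-sided operator bounds. Your symmetrization $C'T_i^{\ast}T_iC=(CC')^{1/2}T_i^{\ast}T_i(CC')^{1/2}$ for positivity and your explicit Neumann-series argument for invertibility are in fact more detailed than the paper, which simply declares positivity ``clear'' and asserts invertibility directly from the operator inequality.

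There is, however, one genuine discrepancy with the setting. You have written the frame bounds as $A\langle x,x\rangle_{\mathcal{A}}\leq\langle S_{(C,C')}x,x\rangle_{\mathcal{A}}\leq B\langle x,x\rangle_{\mathcal{A}}$ with scalar $A,B>0$, and your Neumann-series step (forming $I_{\mathcal{H}}-B^{-1}S_{(C,C')}$ and bounding its norm by $1-AB^{-1}$) relies on that. But the proposition lives in the controlled $\ast$-operator frame setting of Definition~2.4, where $A,B\in\mathcal{A}$ and the inequality is $A\langle x,x\rangle_{\mathcal{A}}A^{\ast}\leq\langle S_{(C,C')}x,x\rangle_{\mathcal{A}}\leq B\langle x,x\rangle_{\mathcal{A}}B^{\ast}$; the paper's proof uses exactly this form and concludes $A\cdot Id_{\mathcal{H}}\cdot A^{\ast}\leq S_{(C,C')}\leq B\cdot Id_{\mathcal{H}}\cdot B^{\ast}$. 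Your argument is easily repaired: first pass to scalar bounds via $\|A^{-1}\|^{-2}\langle x,x\rangle_{\mathcal{A}}\leq A\langle x,x\rangle_{\mathcal{A}}A^{\ast}$ and $B\langle x,x\rangle_{\mathcal{A}}B^{\ast}\leq\|B\|^{2}\langle x,x\rangle_{\mathcal{A}}$ (the paper tacitly assumes $A$ invertible throughout, cf.\ Theorem~2.8), and then run your Neumann series with the scalars $\|A^{-1}\|^{-2}$ and $\|B\|^{2}$ in place of $A$ and $B$.
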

\begin{proof}
	As $\{T_{i}\}_{i\in I}$  is a $(C,C^{'})$-controlled $\ast$-operator frame,then\\
	$$\sum_{i\in I}\langle T_{i}Cx,T_{i}C^{'}x\rangle_{\mathcal{A}}=\langle \sum_{i\in I}C^{'}T^{\ast}_{i}T_{i}Cx,x\rangle_{\mathcal{A}}=\langle S_{CC^{'}}x,x\rangle_{\mathcal{A}}. $$
	It is clear that $S_{CC^{'}}$ is positive, bounded and linear operator. \\
	We have
\begin{align*}
    \langle S_{(C,C^{'})}x,y\rangle_{\mathcal{A}}&=\langle \sum_{i\in I}C^{'}T^{\ast}_{i}T_{i}Cx,y\rangle_{\mathcal{A}}\\
    &= \sum_{i\in I}\langle C^{'}T^{\ast}_{i}T_{i}Cx,y\rangle_{\mathcal{A}}\\
    &= \sum_{i\in I}\langle x,CT^{\ast}_{i}T_{i}C^{'}y\rangle_{\mathcal{A}}\\
    &=\langle x,\sum_{i\in I}CT^{\ast}_{i}T_{i}C^{'}y\rangle_{\mathcal{A}}\\
    &=\langle x,S_{(C^{'},C)}y\rangle_{\mathcal{A}}.
\end{align*}
	Therefore $S_{(C,C^{'})}^{\ast}=S_{(C^{'},C)}$. Since C and $C^{'}$ commute with each other and commute with $T^{\ast}_{i}T_{i}$ we have $S_{(C,C^{'})}$ selfadjoint. From the definition of controlled $\ast$-operator frame we have 
	$$A \langle x,x \rangle_{\mathcal{A}} A^{\ast} \leq \langle S_{(C,C^{'})}x,x\rangle_{\mathcal{A}} \leq B \langle x,x \rangle_{\mathcal{A}} B^{\ast}. $$
	So 
	$$A. Id_{\mathcal{H}}.A^{\ast}\leq S_{(C,C^{'})} \leq B .Id_{\mathcal{H}}.B^{\ast} .$$
	Where $Id_{\mathcal{H}}$ is the identity operator in ${\mathcal{H}}$. thus $S_{(C,C^{'})}$ is invertible.
\end{proof}

\begin{theorem}\label{do51}
	Let $\{T_{i}\}_{i\in I} \in End_{\mathcal{A}}^{\ast}(\mathcal{H})$, and $\sum_{i\in I}\langle T_{i}Cx,T_{i}C^{'}x\rangle_{\mathcal{A}}$ converge in norm ${\mathcal{A}}$. Then $\{T_{i}\}_{i\in I}$ is a $(C,C^{'})$-controlled $\ast$-operator frame if and only if
\begin{equation}\label{do1}
    \|A^{-1}\|^{-2} \|\langle x,x \rangle_{\mathcal{A}}\| \leq \|\sum_{i\in I}\langle T_{i}Cx,T_{i}C^{'}x\rangle_{\mathcal{A}}\| \leq \|B\|^2 \|\langle x,x \rangle_{\mathcal{A}}\|.
\end{equation}
	for every $x \in {\mathcal{H}}$ and stricly nonzero elements $A,B \in {\mathcal{A}} $.
\end{theorem}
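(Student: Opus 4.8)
The plan is to prove the two implications separately, writing $S_x := \sum_{i\in I}\langle T_iCx, T_iC'x\rangle_{\mathcal{A}}$ and recalling from Proposition \ref{P1} that $S_x = \langle S_{(C,C')}x, x\rangle_{\mathcal{A}}$, where under our standing commutation hypotheses the controlled frame operator $S_{(C,C')}$ is positive and self-adjoint. The single device driving the argument is the identity $\|S_x\| = \|\langle S_{(C,C')}^{1/2}x, S_{(C,C')}^{1/2}x\rangle_{\mathcal{A}}\| = \|S_{(C,C')}^{1/2}x\|^2$, which lets me pass between the $\mathcal{A}$-valued inequalities of the frame definition and the scalar norm inequalities of \eqref{do1}.

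For the forward implication I would start from the defining inequality $A\langle x,x\rangle_{\mathcal{A}}A^{\ast} \leq S_x \leq B\langle x,x\rangle_{\mathcal{A}}B^{\ast}$. The upper norm estimate is immediate: taking norms and using submultiplicativity gives $\|S_x\| \leq \|B\langle x,x\rangle_{\mathcal{A}}B^{\ast}\| \leq \|B\|^2\|\langle x,x\rangle_{\mathcal{A}}\|$. For the lower estimate I would exploit that $A$ is strictly nonzero, hence invertible, and write $\langle x,x\rangle_{\mathcal{A}} = A^{-1}\bigl(A\langle x,x\rangle_{\mathcal{A}}A^{\ast}\bigr)(A^{-1})^{\ast}$; taking norms and using that $A\langle x,x\rangle_{\mathcal{A}}A^{\ast}\leq S_x$ forces $\|A\langle x,x\rangle_{\mathcal{A}}A^{\ast}\| \leq \|S_x\|$ (both sides being positive) yields $\|\langle x,x\rangle_{\mathcal{A}}\| \leq \|A^{-1}\|^2\|S_x\|$, that is $\|A^{-1}\|^{-2}\|\langle x,x\rangle_{\mathcal{A}}\| \leq \|S_x\|$.

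The reverse implication is the crux. Assuming \eqref{do1}, I would rewrite it through the square-root identity as $\|A^{-1}\|^{-1}\|x\| \leq \|S_{(C,C')}^{1/2}x\| \leq \|B\|\,\|x\|$ for all $x$. The left inequality shows $S_{(C,C')}^{1/2}$ is bounded below, hence injective with closed range, so Lemma \ref{haja2}(i) applied to $T = S_{(C,C')}^{1/2}$ gives that $T^{\ast}T = S_{(C,C')}$ is invertible and $\|S_{(C,C')}^{-1}\|^{-1}I_{\mathcal{H}} \leq S_{(C,C')} \leq \|S_{(C,C')}^{1/2}\|^2 I_{\mathcal{H}}$. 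It then remains to estimate the two scalars: the upper norm bound gives $\|S_{(C,C')}^{1/2}\| \leq \|B\|$, while the lower bound gives $\|(S_{(C,C')}^{1/2})^{-1}\| \leq \|A^{-1}\|$ and hence $\|S_{(C,C')}^{-1}\|^{-1} \geq \|A^{-1}\|^{-2}$. Combining these produces $\|A^{-1}\|^{-2}I_{\mathcal{H}} \leq S_{(C,C')} \leq \|B\|^2 I_{\mathcal{H}}$, and pairing with $x$ recovers $\|A^{-1}\|^{-2}\langle x,x\rangle_{\mathcal{A}} \leq S_x \leq \|B\|^2\langle x,x\rangle_{\mathcal{A}}$, exhibiting $\{T_i\}_{i\in I}$ as a $(C,C')$-controlled $\ast$-operator frame with scalar bounds $\|A^{-1}\|^{-1}1_{\mathcal{A}}$ and $\|B\|\,1_{\mathcal{A}}$.

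I expect the main obstacle to be exactly this reverse passage, since a norm inequality does not in general upgrade to an $\mathcal{A}$-valued operator inequality; the trick that rescues it is to route everything through the positive self-adjoint operator $S_{(C,C')}$ and its square root, so that Lemma \ref{haja2}(i) converts norm-boundedness of $S_{(C,C')}^{1/2}$ into the desired two-sided operator estimate. Note that the legitimacy of taking $S_{(C,C')}^{1/2}$ and of the whole manipulation rests on the standing assumption that $C,C'$ commute with each other and with each $T_i^{\ast}T_i$, which is precisely what makes $S_{(C,C')}$ positive and self-adjoint via Proposition \ref{P1}.
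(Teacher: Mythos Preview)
Your proof is correct and follows the same overall architecture as the paper's: both directions pivot on the identity $\|S_x\| = \|S_{(C,C')}^{1/2}x\|^2$, and your forward implication is essentially identical to the paper's. For the converse, the paper rewrites \eqref{do1} as the two-sided norm bound $\|A^{-1}\|^{-1}\|x\| \leq \|S_{(C,C')}^{1/2}x\| \leq \|B\|\,\|x\|$ and then invokes Lemma~\ref{haja4} (norm-bounded-below $\Leftrightarrow$ inner-product-bounded-below for the self-adjoint operator $S_{(C,C')}^{1/2}$) to recover the $\mathcal{A}$-valued lower inequality directly; you instead feed the same norm bound into Lemma~\ref{haja2}(i) to obtain the operator inequality $\|S_{(C,C')}^{-1}\|^{-1}I_{\mathcal{H}} \leq S_{(C,C')} \leq \|S_{(C,C')}^{1/2}\|^2 I_{\mathcal{H}}$ and then pair with $x$. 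The paper's route is one step shorter, while yours has the merit of producing explicit frame bounds and of being more careful about what Proposition~\ref{P1} actually supplies at this stage (positivity and self-adjointness from the commutation hypotheses, not yet invertibility).
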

\begin{proof}
	Suppose that $\{T_{i}\}_{i\in I}$ is a $(C,C^{'})$-controlled $\ast$-operator frame. Then $$\langle x,x \rangle_{\mathcal{A}} \leq A^{-1} \langle S_{(C,C^{'})}x,x \rangle_{\mathcal{A}} (A^{\ast})^{-1},$$
	and $$\langle S_{(C,C^{'})}x,x \rangle_{\mathcal{A}} \leq B \langle x,x \rangle_{\mathcal{A}} B^{\ast}.$$
	Hence $$\|A^{-1}\|^{-2} \|\langle x,x \rangle_{\mathcal{A}}\| \leq \|\sum_{i\in I}\langle T_{i}Cx,T_{i}C^{'}x\rangle_{\mathcal{A}}\| \leq \|B\|^2 \|\langle x,x \rangle_{\mathcal{A}}\|.$$
	Converselly, assume that (\ref{do1}) holds. From (\ref{P1}), the  $(C,C^{'})$-controlled frame operator $S_{(C,C^{'})}$ is positive, selfadjoint and invertible. Hence 
\begin{equation}\label{do2}
    \langle (S_{(C,C^{'})})^{\frac{1}{2}}x,(S_{(C,C^{'})})^{\frac{1}{2}}x \rangle_{\mathcal{A}}=  \langle S_{(C,C^{'})}x,x\rangle_{\mathcal{A}}=\sum_{i\in I}\langle T_{i}Cx,T_{i}C^{'}x\rangle_{\mathcal{A}}.
\end{equation}
    Using (\ref{do1}) and (\ref{do2}), we get
\begin{equation}\label{do3}
    \|A^{-1}\|.\|x\|\leq \|(S_{(C,C^{'})})^{\frac{1}{2}}x\|\leq \|B\|. \|x\|.
\end{equation}
    Using (\ref{do3}) and Lemma (\ref{haja4}), we conclude that $\{T_{i}\}_{i\in I}$ is a $(C,C^{'})$-controlled $\ast$-operator frame $End_{\mathcal{A}}^{\ast}(\mathcal{H})$.
	 
\end{proof}
    The following theorem shows that any $\ast$-operator frame is a $C^2$-controlled $\ast$-operator frame for ${\mathcal{H}}$ and vice versa.
\begin{theorem}
	Let $C \in GL^{+}(\mathcal{H})$. The family $\{T_{i}\}_{i\in I} \in End_{\mathcal{A}}^{\ast}(\mathcal{H})$ is a $\ast$-operator frame for $End_{\mathcal{A}}^{\ast}(\mathcal{H})$ if and only if 
	$\{T_{i}\}_{i\in I}$ is a $C^2$-controlled $\ast$-operator frame.
\end{theorem}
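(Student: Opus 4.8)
The plan is to establish both implications by the elementary substitution trick, leaning only on the fact that $C\in GL^{+}(\mathcal{H})$ is positive and invertible. I read the $C^{2}$-controlled $\ast$-operator frame as the $(C,C)$-controlled one, whose middle term is $\sum_{i\in I}\langle T_{i}Cx,T_{i}Cx\rangle_{\mathcal{A}}$; under the standing commutativity hypothesis its frame operator is $CSC=C^{2}S$ with $S=\sum_{i\in I}T_{i}^{\ast}T_{i}$, which is what justifies the name. For the bookkeeping I use the $\ast$-frame convention $A\langle x,x\rangle_{\mathcal{A}}A^{\ast}\le\sum_{i}\langle T_{i}x,T_{i}x\rangle_{\mathcal{A}}\le B\langle x,x\rangle_{\mathcal{A}}B^{\ast}$, matching the form used in the controlled definition.

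First I would record the two estimates on $C$ that drive everything. Since $C\in GL^{+}(\mathcal{H})$ we have $\|C^{-1}\|^{-1}I_{\mathcal{H}}\le C\le\|C\|I_{\mathcal{H}}$, hence after squaring $\|C^{-1}\|^{-2}I_{\mathcal{H}}\le C^{2}\le\|C\|^{2}I_{\mathcal{H}}$; because $C=C^{\ast}$ we have $\langle Cx,Cx\rangle_{\mathcal{A}}=\langle C^{2}x,x\rangle_{\mathcal{A}}$, so these operator bounds read $\|C^{-1}\|^{-2}\langle x,x\rangle_{\mathcal{A}}\le\langle Cx,Cx\rangle_{\mathcal{A}}\le\|C\|^{2}\langle x,x\rangle_{\mathcal{A}}$ for all $x$. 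Since $C^{-1}$ is again positive and invertible, the same computation gives $\|C\|^{-2}\langle x,x\rangle_{\mathcal{A}}\le\langle C^{-1}x,C^{-1}x\rangle_{\mathcal{A}}\le\|C^{-1}\|^{2}\langle x,x\rangle_{\mathcal{A}}$.

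For the forward implication, assume $\{T_{i}\}_{i\in I}$ is a $\ast$-operator frame with bounds $A,B$. As the frame inequality holds for every vector, I would substitute $Cx$ for $x$ to get $A\langle Cx,Cx\rangle_{\mathcal{A}}A^{\ast}\le\sum_{i}\langle T_{i}Cx,T_{i}Cx\rangle_{\mathcal{A}}\le B\langle Cx,Cx\rangle_{\mathcal{A}}B^{\ast}$, whose middle term is exactly the $(C,C)$-controlled quantity. Bounding $\langle Cx,Cx\rangle_{\mathcal{A}}$ below and above by the first pair of estimates, and using that $p\mapsto ApA^{\ast}$ is order preserving on $\mathcal{A}^{+}$ (if $0\le p\le q$ then $A(q-p)A^{\ast}\ge0$), I absorb the positive real scalars into the coefficients via $\|C^{-1}\|^{-2}A\langle x,x\rangle_{\mathcal{A}}A^{\ast}=(\|C^{-1}\|^{-1}A)\langle x,x\rangle_{\mathcal{A}}(\|C^{-1}\|^{-1}A)^{\ast}$ and similarly on the right. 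This yields a $C^{2}$-controlled $\ast$-operator frame with bounds $\|C^{-1}\|^{-1}A$ and $\|C\|B$.

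The converse is symmetric: starting from a $C^{2}$-controlled frame with bounds $A,B$ and substituting $C^{-1}x$ for $x$ (legitimate since $C$ is invertible), the middle term collapses to $\sum_{i}\langle T_{i}x,T_{i}x\rangle_{\mathcal{A}}$, and bounding $\langle C^{-1}x,C^{-1}x\rangle_{\mathcal{A}}$ by the second pair of estimates produces a $\ast$-operator frame with bounds $\|C\|^{-1}A$ and $\|C^{-1}\|B$. I expect no genuine obstacle; the only points needing care are the order-preservation of $p\mapsto ApA^{\ast}$ and the scalar bookkeeping that rewrites $\lambda\,A\langle x,x\rangle_{\mathcal{A}}A^{\ast}$ as $(\sqrt{\lambda}\,A)\langle x,x\rangle_{\mathcal{A}}(\sqrt{\lambda}\,A)^{\ast}$, so that the new bounds are honest elements of $\mathcal{A}$ of the required type. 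Finally one notes that these new bounds, being positive scalar multiples of $A$ and $B$, inherit the strict nonzero-ness needed for them to serve as frame bounds.
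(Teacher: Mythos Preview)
Your proposal is correct and follows essentially the same approach as the paper: both directions are handled by the substitution trick (replacing $x$ by $Cx$ or $C^{-1}x$) together with the norm estimates $\langle Cx,Cx\rangle_{\mathcal{A}}\le\|C\|^{2}\langle x,x\rangle_{\mathcal{A}}$ and $\langle x,x\rangle_{\mathcal{A}}\le\|C^{-1}\|^{2}\langle Cx,Cx\rangle_{\mathcal{A}}$ coming from Lemma~\ref{haja3}, and then absorbing the resulting positive scalars into the $\mathcal{A}$-valued bounds. Your write-up is in fact tidier than the paper's (which presents the two implications in the opposite order and contains a minor typo in one of the final bounds), and your explicit remarks on the order-preservation of $p\mapsto ApA^{\ast}$ and on rewriting $\lambda A\langle x,x\rangle_{\mathcal{A}}A^{\ast}=(\sqrt{\lambda}A)\langle x,x\rangle_{\mathcal{A}}(\sqrt{\lambda}A)^{\ast}$ make the argument more transparent.
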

\begin{proof}
	Let $\{T_{i}\}_{i\in I}$ be a $C^2$-controlled $\ast$-operator frame with bounds A and B. Then $$\sum_{i\in I}\langle T_{i}Cx,T_{i}Cx\rangle_{\mathcal{A}}\leq B \langle x,x \rangle_{\mathcal{A}} B^{\ast},\,\, x \in {\mathcal{H}}. $$
	on one hand, for every $x \in {\mathcal{H}}$ we have,
\begin{align*}
    A \langle x,x \rangle_{\mathcal{A}} A^{\ast}&=A \langle CC^{-1}x,CC^{-1}x \rangle_{\mathcal{A}} A^{\ast}\\
    &\leq A \|C\|^2\langle C^{-1}x,C^{-1}x \rangle_{\mathcal{A}} A^{\ast}\\
    &\leq \|C\|^2 \sum_{i\in I}\langle T_{i}CC^{-1}x,T_{i}CC^{-1}x\rangle_{\mathcal{A}}\\
    &= \|C\|^2 \sum_{i\in I}\langle T_{i}x,T_{i}x\rangle_{\mathcal{A}}.
\end{align*}
    Then $$A \|C\|^{-1}\langle x,x \rangle_{\mathcal{A}} A^{\ast}\|C\|^{-1}\leq \sum_{i\in I}\langle T_{i}x,T_{i}x\rangle_{\mathcal{A}}.$$
	On the other hand for any $x \in {\mathcal{H}}$ we have 
\begin{align*}
   \sum_{i\in I}\langle T_{i}x,T_{i}x\rangle_{\mathcal{A}} &=\sum_{i\in I}\langle T_{i}CC^{-1}x,T_{i}CC^{-1}x\rangle_{\mathcal{A}}\\
    &\leq  B \langle C^{-1}x,C^{-1}x \rangle_{\mathcal{A}} B^{\ast}\\
    &\leq B \|C^{-1}\|^2\langle x,x \rangle_{\mathcal{A}}B^{\ast}.
\end{align*}
	Then $$A \|C\|^{-1}\langle x,x \rangle_{\mathcal{A}} A^{\ast}\|C\|^{-1}\leq \sum_{i\in I}\langle T_{i}x,T_{i}x\rangle_{\mathcal{A}} \leq B \|C\|^{-1}\langle x,x \rangle_{\mathcal{A}} B^{\ast}\|C\|^{-1} .$$
	Therefore $\{T_{i}\}_{i\in I}$ is a $\ast$-operator frame with bounds $A \|C\|^{-1}$  and $B \|C\|^{-1}$.\\
	For the converse, suppose that $\{T_{i}\}_{i\in I}$ is a $\ast$-operator frame with bounds M and N. on one hand we have for any $x \in {\mathcal{H}}$,
	$$M\langle x,x \rangle_{\mathcal{A}} M^{\ast}\leq \sum_{i\in I}\langle T_{i}x,T_{i}x\rangle_{\mathcal{A}} \leq N \langle x,x \rangle_{\mathcal{A}} N^{\ast} .$$
	Thus, for all $x \in {\mathcal{H}}$,
\begin{align*}
    \sum_{i\in I}\langle T_{i}Cx,T_{i}Cx\rangle_{\mathcal{A}}&\leq N \|C\|^2\langle x,x \rangle_{\mathcal{A}} N^{\ast}\\
    &=N \|C\|\langle x,x \rangle_{\mathcal{A}} N^{\ast}\|C\|.
\end{align*}
    On the other hand, we have 
\begin{align*}
    M\langle x,x \rangle_{\mathcal{A}} M^{\ast}&=M\langle C^{-1}Cx,C^{-1}Cx \rangle_{\mathcal{A}} M^{\ast}\\
    &\leq \|C^{-1}\|^2 \sum_{i\in I}\langle T_{i}Cx,T_{i}Cx\rangle_{\mathcal{A}}.
\end{align*}
    Therefore 
    $$M\|C^{-1}\|^{-1}\langle x,x \rangle_{\mathcal{A}} M^{\ast}\|C^{-1}\|^{-1}\leq \sum_{i\in I}\langle T_{i}Cx,T_{i}Cx\rangle_{\mathcal{A}}\leq N \|C\|\langle x,x \rangle_{\mathcal{A}} N^{\ast}\|C\|. $$
	This gives that $\{T_{i}\}_{i\in I}$ is a $C^2$-controlled $\ast$-operator frame with bounds $M\|C^{-1}\|^{-1}$ and $N^{\ast}\|C\|$.
\end{proof}
\begin{proposition}
	Let $\{T_{i}\}_{i\in I}$ be an $\ast$-operator frame for $End_{\mathcal{A}}^{\ast}(\mathcal{H})$ with frmar operator S and $C,C^{'} \in GL^{+}(H)$. Then $\{T_{i}\}_{i\in I}$ is a $(C,C^{'})$-controlled $\ast$-operator frame for $End_{\mathcal{A}}^{\ast}(\mathcal{H})$.

\end{proposition}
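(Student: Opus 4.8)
The plan is to reduce the controlled sum $\sum_{i\in I}\langle T_iCx,T_iC'x\rangle_{\mathcal{A}}$ to an ordinary $\ast$-operator frame sum by absorbing the two control operators into a single positive operator and evaluating the frame at a new vector. Since $C,C'\in GL^{+}(\mathcal{H})$ commute with each other and with each $T_i^{\ast}T_i$, so does the positive square root $(CC')^{\frac{1}{2}}$, being a norm-limit of polynomials in $CC'$. Using adjointability together with the selfadjointness of $C,C'$ and $(CC')^{\frac{1}{2}}$, I would compute, for each $i$ and each $x\in\mathcal{H}$,
\begin{align*}
\langle T_iCx,T_iC'x\rangle_{\mathcal{A}} &= \langle x, CT_i^{\ast}T_iC'x\rangle_{\mathcal{A}} = \langle x,(CC')^{\frac{1}{2}}T_i^{\ast}T_i(CC')^{\frac{1}{2}}x\rangle_{\mathcal{A}}\\
&= \langle T_i(CC')^{\frac{1}{2}}x,\,T_i(CC')^{\frac{1}{2}}x\rangle_{\mathcal{A}}.
\end{align*}
Writing $z=(CC')^{\frac{1}{2}}x$ and summing over $i$, this identifies $\sum_{i\in I}\langle T_iCx,T_iC'x\rangle_{\mathcal{A}}=\sum_{i\in I}\langle T_iz,T_iz\rangle_{\mathcal{A}}$, so the controlled sum is exactly the frame sum of $\{T_i\}_{i\in I}$ evaluated at $z$.

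Next I would invoke the defining $\ast$-operator frame inequality for $\{T_i\}_{i\in I}$ at the vector $z$, which gives $A\langle z,z\rangle_{\mathcal{A}}A^{\ast}\leq\sum_{i\in I}\langle T_iz,T_iz\rangle_{\mathcal{A}}\leq B\langle z,z\rangle_{\mathcal{A}}B^{\ast}$, where $A,B\in\mathcal{A}$ are the bounds attached to the frame operator $S$. It then remains to compare $\langle z,z\rangle_{\mathcal{A}}=\langle (CC')^{\frac{1}{2}}x,(CC')^{\frac{1}{2}}x\rangle_{\mathcal{A}}=\langle CC'x,x\rangle_{\mathcal{A}}$ with $\langle x,x\rangle_{\mathcal{A}}$. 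The operator $CC'$ is positive and invertible (a product of commuting positive invertibles), so applying Lemma \ref{haja2} to $(CC')^{\frac{1}{2}}$ yields the scalar operator bounds $\|(CC')^{-1}\|^{-1}\,Id_{\mathcal{H}}\leq CC'\leq\|CC'\|\,Id_{\mathcal{H}}$, whence $\|(CC')^{-1}\|^{-1}\langle x,x\rangle_{\mathcal{A}}\leq\langle z,z\rangle_{\mathcal{A}}\leq\|CC'\|\langle x,x\rangle_{\mathcal{A}}$.

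Finally I would combine these estimates. Using that the map $u\mapsto AuA^{\ast}$ is order-preserving on $\mathcal{A}^{+}$ and that a nonnegative scalar passes through it, the lower estimate produces $\sum_{i\in I}\langle T_iCx,T_iC'x\rangle_{\mathcal{A}}\geq\|(CC')^{-1}\|^{-1}A\langle x,x\rangle_{\mathcal{A}}A^{\ast}$ and the upper estimate gives $\sum_{i\in I}\langle T_iCx,T_iC'x\rangle_{\mathcal{A}}\leq\|CC'\|\,B\langle x,x\rangle_{\mathcal{A}}B^{\ast}$. Setting $\widetilde{A}=\|(CC')^{-1}\|^{-\frac{1}{2}}A$ and $\widetilde{B}=\|CC'\|^{\frac{1}{2}}B$, these read $\widetilde{A}\langle x,x\rangle_{\mathcal{A}}\widetilde{A}^{\ast}\leq\sum_{i\in I}\langle T_iCx,T_iC'x\rangle_{\mathcal{A}}\leq\widetilde{B}\langle x,x\rangle_{\mathcal{A}}\widetilde{B}^{\ast}$, exhibiting $\{T_i\}_{i\in I}$ as a $(C,C')$-controlled $\ast$-operator frame with bounds $\widetilde{A},\widetilde{B}$. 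I expect the only delicate point to be the reduction in the first paragraph: one must genuinely use the commutation hypotheses to slide the square root $(CC')^{\frac{1}{2}}$ past $T_i^{\ast}T_i$ and to symmetrise the expression into the form $\langle T_iz,T_iz\rangle_{\mathcal{A}}$. Once that identity is in place, the remaining steps are routine order manipulations in the $C^{\ast}$-algebra $\mathcal{A}$.
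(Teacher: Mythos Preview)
Your argument is correct but follows a genuinely different route from the paper. The paper passes to norms: from the $\ast$-frame inequality it extracts the scalar estimate $\|A^{-1}\|^{-2}\|\langle x,x\rangle_{\mathcal{A}}\|\leq\|\langle Sx,x\rangle_{\mathcal{A}}\|\leq\|B\|^{2}\|\langle x,x\rangle_{\mathcal{A}}\|$, then relates $\|\langle S_{(C,C')}x,x\rangle_{\mathcal{A}}\|$ to $\|\langle Sx,x\rangle_{\mathcal{A}}\|$ through the factors $\|C\|$ and $\|C'\|$, and finally invokes the norm characterization of Theorem~\ref{do51} to recover the controlled $\ast$-frame inequality; the resulting bounds are the scalars $\|A^{-1}\|^{-1}\|C\|^{1/2}\|C'\|^{1/2}$ and $\|B\|\,\|C\|^{1/2}\|C'\|^{1/2}$. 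You instead stay entirely at the level of $\mathcal{A}$-valued inner products: the identity $\langle T_iCx,T_iC'x\rangle_{\mathcal{A}}=\langle T_iz,T_iz\rangle_{\mathcal{A}}$ with $z=(CC')^{1/2}x$ reduces the controlled sum to an ordinary frame sum at a new vector, after which Lemma~\ref{haja2} and the order-preservation of $u\mapsto AuA^{\ast}$ finish directly. Your approach is more elementary (no need for the norm characterization or the controlled frame operator $S_{(C,C')}$) and yields bounds $\|(CC')^{-1}\|^{-1/2}A$ and $\|CC'\|^{1/2}B$ that retain the original $\mathcal{A}$-valued constants $A,B$ rather than replacing them by scalar multiples of $1_{\mathcal{A}}$; the paper's route, by contrast, serves to illustrate how Theorem~\ref{do51} can be applied.
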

\begin{proof}
	Let $\{T_{i}\}_{i\in I}$ be an $\ast$-operator frame with bounds A and B. Then by (\ref{eq3}) we have 
	$$\langle x,x \rangle_{\mathcal{A}} \leq A^{-1}\langle Sx,x \rangle_{\mathcal{A}}(A^{\ast})^{-1},\,\,\,\,\,\,\,\langle Sx,x \rangle_{\mathcal{A}}\leq B\langle x,x \rangle_{\mathcal{A}}B^{\ast} .$$
	Hence 
\begin{equation}\label{do5}
    \|A^{-1}\|^{-2} \|\langle x,x \rangle_{\mathcal{A}}\| \leq \|\sum_{i\in I}\langle T_{i}x,T_{i}x\rangle_{\mathcal{A}}\| \leq \|B\|^2 \|\langle x,x \rangle_{\mathcal{A}}\|.
\end{equation}
	
	We have 
	$$\|\sum_{i\in I}\langle T_{i}Cx,T_{i}C^{'}x\rangle_{\mathcal{A}}\|=\|\langle S_{(C,C^{'})}x,x \rangle_{\mathcal{A}}\|,$$
	and 
\begin{equation}\label{do4}
    \|\sum_{i\in I}\langle T_{i}Cx,T_{i}C^{'}x\rangle_{\mathcal{A}}\|=\|C\|.\|C^{'}\|.\|\langle Sx,x \rangle_{\mathcal{A}}\|.
\end{equation}
    Using (\ref{do5}) and (\ref{do4}), we have

\begin{align*}
    \|A^{-1}\|^{-2} \|C\|.\|C^{'}\| \|\langle x,x \rangle_{\mathcal{A}}\|\leq \|\sum_{i\in I}\langle T_{i}Cx,T_{i}C^{'}x\rangle_{\mathcal{A}}\|\leq \|B\|^2 \|C\|.\|C^{'}\| \|\langle x,x \rangle_{\mathcal{A}}\|.
\end{align*}

    Therefore, from theorem (\ref{do51}), we have $\{T_{i}\}_{i\in I}$ is a $(C,C^{'})$-controlled $\ast$-operator frame with bounds $\|A^{-1}\|^{-1} \|C\|^{\frac{1}{2}}.\|C^{'}\|^{\frac{1}{2}}$  and $\|B\| \|C\|^{\frac{1}{2}}.\|C^{'}\|^{\frac{1}{2}} $.
\end{proof}
\begin{theorem}
	Let $C,C^{'} \in GL^{+}(H)$, $\{T_{i}\}_{i\in I} \in End_{\mathcal{A}}^{\ast}(\mathcal{H})$. Suppose that $C,C^{'}$ commute with each other and commute with $T_{i}T_{i}^{\ast}$ for all $i\in I$. The family $\{T_{i}\}_{i\in I}$ is a $(C,C^{'})$-controlled $\ast$-operator Bessel sequence for $End_{\mathcal{A}}^{\ast}(\mathcal{H})$ with bound B if and only if  the operator $T_{CC^{'}}:l^{2}(\mathcal{H}) \longrightarrow \mathcal{H}$ given by
	\begin{equation*}
	T_{CC^{'}}(\{y_{i}\}_{i\in I})=\sum_{i\in I}(CC^{'})^{\frac {1}{2}}T^{\ast}_{i}y_{i} \qquad  \forall \{y_{i}\}_{i\in I} \in l^{2}(\mathcal{H}).
	\end{equation*}
	is well defined and bounded operator with $\|T_{CC^{'}}\|\leq \|B\|$. 
	
\end{theorem}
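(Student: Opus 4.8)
The plan is to recognise $T_{CC^{'}}$ as the synthesis operator of the family and to pair it with the analysis operator $T^{\ast}_{(C,C^{'})}x=\{T_{i}(CC^{'})^{\frac{1}{2}}x\}_{i\in I}$ of \eqref{2}. A direct computation, using that $(CC^{'})^{\frac{1}{2}}$ is self-adjoint, gives for $x\in\mathcal{H}$ and $\{y_{i}\}_{i\in I}\in l^{2}(\mathcal{H})$ the duality identity
$$\Big\langle \sum_{i\in I}(CC^{'})^{\frac{1}{2}}T^{\ast}_{i}y_{i},\, x\Big\rangle_{\mathcal{A}}=\sum_{i\in I}\langle y_{i},\, T_{i}(CC^{'})^{\frac{1}{2}}x\rangle_{\mathcal{A}}.$$
The commutation hypotheses on $C,C^{'}$ moreover furnish the key identity
$$\sum_{i\in I}\langle T_{i}(CC^{'})^{\frac{1}{2}}x,T_{i}(CC^{'})^{\frac{1}{2}}x\rangle_{\mathcal{A}}=\sum_{i\in I}\langle T_{i}Cx,T_{i}C^{'}x\rangle_{\mathcal{A}},$$
which bridges the Bessel quantity and the analysis operator. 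Together these reduce the statement to the familiar equivalence between the Bessel condition and boundedness of the synthesis operator.

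For the forward implication I would bound the partial sums of $T_{CC^{'}}$ over finite index sets $J$. Combining the duality identity with the Cauchy--Schwarz inequality in $l^{2}(\mathcal{H})$ gives
$$\Big\|\sum_{i\in J}\langle y_{i},T_{i}(CC^{'})^{\frac{1}{2}}x\rangle_{\mathcal{A}}\Big\|\leq\Big\|\sum_{i\in J}\langle y_{i},y_{i}\rangle_{\mathcal{A}}\Big\|^{\frac{1}{2}}\Big\|\sum_{i\in J}\langle T_{i}(CC^{'})^{\frac{1}{2}}x,T_{i}(CC^{'})^{\frac{1}{2}}x\rangle_{\mathcal{A}}\Big\|^{\frac{1}{2}}.$$
By the key identity, monotonicity of the norm on positive elements, and the Bessel bound, the last factor is at most $\|B\langle x,x\rangle_{\mathcal{A}}B^{\ast}\|^{\frac{1}{2}}\leq\|B\|\,\|x\|$. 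Since $\|z\|=\sup_{\|x\|\leq1}\|\langle z,x\rangle_{\mathcal{A}}\|$ in $\mathcal{H}$, taking the supremum over $\|x\|\leq1$ yields
$$\Big\|\sum_{i\in J}(CC^{'})^{\frac{1}{2}}T^{\ast}_{i}y_{i}\Big\|\leq\|B\|\,\Big\|\sum_{i\in J}\langle y_{i},y_{i}\rangle_{\mathcal{A}}\Big\|^{\frac{1}{2}}.$$
Applied to tails, and using $\{y_{i}\}_{i\in I}\in l^{2}(\mathcal{H})$, this makes the partial sums a Cauchy net, so $T_{CC^{'}}$ is well defined; applied to $J=I$ it gives $\|T_{CC^{'}}\|\leq\|B\|$.

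For the converse I would use that the bounded operator $T_{CC^{'}}$ is adjointable with adjoint $T^{\ast}_{(C,C^{'})}$, so that $\{T_{i}(CC^{'})^{\frac{1}{2}}x\}_{i\in I}\in l^{2}(\mathcal{H})$ and $\|T^{\ast}_{(C,C^{'})}\|=\|T_{CC^{'}}\|\leq\|B\|$. Lemma \ref{haja3} applied to $T^{\ast}_{(C,C^{'})}\in End_{\mathcal{A}}^{\ast}(\mathcal{H},l^{2}(\mathcal{H}))$ then gives
$$\sum_{i\in I}\langle T_{i}(CC^{'})^{\frac{1}{2}}x,T_{i}(CC^{'})^{\frac{1}{2}}x\rangle_{\mathcal{A}}=\langle T^{\ast}_{(C,C^{'})}x,T^{\ast}_{(C,C^{'})}x\rangle_{\mathcal{A}}\leq\|B\|^{2}\langle x,x\rangle_{\mathcal{A}},$$
and the key identity rewrites the left-hand side as $\sum_{i\in I}\langle T_{i}Cx,T_{i}C^{'}x\rangle_{\mathcal{A}}$, exhibiting $\{T_{i}\}_{i\in I}$ as a $(C,C^{'})$-controlled $\ast$-operator Bessel sequence with bound $\|B\|1_{\mathcal{A}}$.

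The main difficulty lies in the convergence bookkeeping rather than the algebra. In the forward direction one must verify that the uniform-in-$x$ estimate genuinely forces the $\mathcal{H}$-valued partial sums to be Cauchy, which is precisely where the module Cauchy--Schwarz inequality and the assumption $\{y_{i}\}_{i\in I}\in l^{2}(\mathcal{H})$ are indispensable; in the converse one must lean on adjointability of $T_{CC^{'}}$ to be certain that $\{T_{i}(CC^{'})^{\frac{1}{2}}x\}_{i\in I}$ actually belongs to $l^{2}(\mathcal{H})$ before Lemma \ref{haja3} can be invoked.
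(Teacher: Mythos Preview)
Your forward direction is essentially the paper's argument: both compute $\|T_{CC^{'}}(\{y_{i}\})\|$ through the identity $\|z\|=\sup_{\|x\|=1}\|\langle z,x\rangle_{\mathcal{A}}\|$, shift the adjoint onto $x$, apply the Cauchy--Schwarz inequality in $l^{2}(\mathcal{H})$, and use the commutation identity $\sum_{i}\langle T_{i}(CC^{'})^{1/2}x,T_{i}(CC^{'})^{1/2}x\rangle_{\mathcal{A}}=\sum_{i}\langle T_{i}Cx,T_{i}C^{'}x\rangle_{\mathcal{A}}$ to bring in the Bessel bound. Your treatment of convergence via finite partial sums and the Cauchy criterion is more explicit than the paper, which simply asserts that the computation shows the series converges.

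Your converse takes a genuinely different route. The paper feeds the specific sequence $y_{i}=T_{i}(CC^{'})^{1/2}x$ into $T_{CC^{'}}$ and bootstraps: it writes $\sum_{i}\langle T_{i}Cx,T_{i}C^{'}x\rangle_{\mathcal{A}}=\langle T_{CC^{'}}(\{y_{i}\}),x\rangle_{\mathcal{A}}$, bounds this by $\|T_{CC^{'}}\|\,\big(\sum_{i}\langle T_{i}Cx,T_{i}C^{'}x\rangle_{\mathcal{A}}\big)^{1/2}\|x\|$, and divides through to obtain a scalar bound $\|B\|^{2}\|x\|^{2}$; this tacitly presumes $\{y_{i}\}\in l^{2}(\mathcal{H})$ and ends with a norm-type inequality rather than the $\mathcal{A}$-valued one. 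You instead identify $T^{\ast}_{(C,C^{'})}$ as the adjoint and invoke Lemma~\ref{haja3} directly, which delivers the sharper $\mathcal{A}$-valued estimate $\sum_{i}\langle T_{i}Cx,T_{i}C^{'}x\rangle_{\mathcal{A}}\leq\|B\|^{2}\langle x,x\rangle_{\mathcal{A}}$ in one step. The trade-off, which you rightly flag, is that in a Hilbert $C^{\ast}$-module boundedness alone does not guarantee adjointability, so your argument needs an extra line verifying that the formal adjoint lands in $l^{2}(\mathcal{H})$; once that is supplied, your route is cleaner and yields a stronger conclusion than the paper's bootstrap.
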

\begin{proof}
	Assume that $\{T_{i}\}_{i\in I}$ is a $(C,C^{'})$-controlled $\ast$-operator Bessel sequence for $End_{\mathcal{A}}^{\ast}(\mathcal{H})$ with bound B. As a result of (\ref{do5}) 
	$$ \|\sum_{i\in I}\langle T_{i}Cx,T_{i}C^{'}x\rangle_{\mathcal{A}}\| \leq \|B\|^2 \|\langle x,x \rangle_{\mathcal{A}}\|.$$
	We have
	\begin{align*}
	\|T_{(C,C^{'})}(\{y_{i}\}_{i\in I})\|^{2}&=\underset{x \in \mathcal{H}, \|x\|=1}{\sup}\|\langle \sum_{i\in I}(CC^{'})^{\frac {1}{2}} T^{\ast}_{i}y_{i},x\rangle_{\mathcal{A}}\|^{2}\\
	&=\underset{x \in \mathcal{H}, \|x\|=1}{\sup}\| \sum_{i\in I}\langle(CC^{'})^{\frac {1}{2}}T^{\ast}_{i}y_{i},x\rangle_{\mathcal{A}} \|^{2}\\ 
	&=\underset{x \in U, \|x\|=1}{\sup}\| \sum_{i\in I}\langle y_{i},T_{i}(CC^{'})^{\frac {1}{2}}x\rangle_{\mathcal{A}} \|^{2}\\ 
	&\leq \underset{x \in U, \|x\|=1}{\sup}\| \sum_{i\in I}\langle y_{i},y_{i}\rangle_{\mathcal{A}} \| \|\sum_{i\in I}\langle T_{i}(CC^{'})^{\frac {1}{2}}x,T_{i}(CC^{'})^{\frac {1}{2}}x\rangle_{\mathcal{A}} \|\\ 
	&= \underset{x \in U, \|x\|=1}{\sup}\| \sum_{i\in I}\langle y_{i},y_{i}\rangle_{\mathcal{A}} \| \|\sum_{i\in I}\langle T_{i}Cx,T_{i}C^{'}x\rangle_{\mathcal{A}} \|\\ 
	&\leq \underset{x \in \mathcal{H}, \|x\|=1}{\sup}\| \sum_{i\in I}\langle y_{i},y_{i}\rangle_{\mathcal{A}} \|\|B\|^{2}\|x\|^{2} =\|B\|^{2}\|\{y_{i}\}_{i \in I}\|^{2}.
	\end{align*}
	Then, the sum $\sum_{i\in I}\langle(CC^{'})^{\frac {1}{2}}T^{\ast}_{i}y_i$ is convergent and we have $$\|T_{(C,C^{'})}(\{y_{i}\}_{i\in I})\|^{2}\leq \|B\|^{2}\|\{y_{i}\}_{i \in I}\|^{2}.$$
	Hence $$\|T_{(C,C^{'})}\|^{2}\leq \|B\|^{2}.$$
	Thus the operator $T_{(C,C^{'})}$ is well defined, bounded and $$\|T_{(C,C^{'})}\|\leq \|B\|.$$
	For the converse, suppose that the operator $T_{(C,C^{'})}$ is well defined, bounded and $\|T_{(C,C^{'})}\|\leq \|B\|.$
	For all $x \in {\mathcal{H}} $, we have 
\begin{align*}
    \sum_{i\in I}\langle T_{i}Cx,T_{i}C^{'}x\rangle_{\mathcal{A}}&=\sum_{i\in I}\langle C^{'} T_{i}^{\ast}T_{i}Cx,x\rangle_{\mathcal{A}}\\
    &=\sum_{i\in I}\langle (CC^{'})^{\frac{1}{2}} T_{i}^{\ast}T_{i}(CC^{'})^{\frac{1}{2}}x,x\rangle_{\mathcal{A}}\\
    &=\langle T_{(C,C^{'})}(\{y_{i}\}_{i\in I}),x\rangle_{\mathcal{A}}\\
    &\leq \|T_{(C,C^{'})}\| \|(\{y_{i}\}_{i\in I})\| \|x\|\\
    &\leq \|T_{(C,C^{'})}\| (\sum_{i\in I}\|T_{i}(CC^{'})^{\frac{1}{2}}x\|^2)^{\frac{1}{2}}  \|x\|\\
    &=\|T_{(C,C^{'})}\|(\sum_{i\in I}\langle T_{i}Cx,T_{i}C^{'}x\rangle_{\mathcal{A}})^{\frac{1}{2}} \|x\|
\end{align*}
    where $y_i=T_{i}(CC^{'})^{\frac{1}{2}}x $.\\
    Therefore $$\sum_{i\in I}\langle T_{i}Cx,T_{i}C^{'}x\rangle_{\mathcal{A}}\leq \|T_{(C,C^{'})}\|^2 \|x\|^2.$$
    Hence $$\sum_{i\in I}\langle T_{i}Cx,T_{i}C^{'}x\rangle_{\mathcal{A}}\leq \|B\|^2 \|x\|^2,\,\,\,\ as\ ,\,\,\,\,\,\|T_{(C,C^{'})}\|\leq \|B\|.$$
    This give that $\{T_{i}\}_{i\in I}$ is a $(C,C^{'})$-controlled $\ast$-operator Bessel sequence for $End_{\mathcal{A}}^{\ast}(\mathcal{H})$.
\end{proof}
\begin{theorem}\label{do8}
	Let $\{T_{i}\}_{i\in I} \in End_{\mathcal{A}}^{\ast}(\mathcal{H})$ be a $(C,C^{'})$-controlled $\ast$-operator frame for $End_{\mathcal{A}}^{\ast}(\mathcal{H})$ with bounds A and B, with operator frame $S_{(C,C^{'})}$. Let $\theta \in End_{\mathcal{A}}^{\ast}(\mathcal{H}) $ be injective and has a closed range. Suppose that $\theta$ commute with $C$ and $C^{'}$. Then $\{T_{i}\theta\}_{i\in I}$ is a $(C,C^{'})$-controlled $\ast$-operator frame for $End_{\mathcal{A}}^{\ast}(\mathcal{H})$ with operator frame $\theta^{\ast}S_{(C,C^{'})}\theta$ with bounds $\|(\theta^{\ast}\theta)^{-1}\|^{\frac{-1}{2}}A$ and $\|\theta\|B$.
	
\end{theorem}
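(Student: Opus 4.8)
The plan is to verify that $\{T_{i}\theta\}_{i\in I}$ satisfies the defining double inequality of a $(C,C')$-controlled $\ast$-operator frame and, along the way, to identify its frame operator. First I would reduce the central sum to a statement about the original frame. Since $\theta$ commutes with $C$ and $C'$, for each $x\in\mathcal{H}$ one has $T_{i}\theta Cx=T_{i}C\theta x$ and $T_{i}\theta C'x=T_{i}C'\theta x$, so that
\begin{equation*}
\sum_{i\in I}\langle T_{i}\theta Cx,T_{i}\theta C'x\rangle_{\mathcal{A}}=\sum_{i\in I}\langle T_{i}C(\theta x),T_{i}C'(\theta x)\rangle_{\mathcal{A}}=\langle S_{(C,C')}\theta x,\theta x\rangle_{\mathcal{A}}=\langle\theta^{\ast}S_{(C,C')}\theta x,x\rangle_{\mathcal{A}}.
\end{equation*}
This identifies the frame operator of $\{T_{i}\theta\}_{i\in I}$ as $\theta^{\ast}S_{(C,C')}\theta$ and reduces the whole problem to estimating $\langle S_{(C,C')}y,y\rangle_{\mathcal{A}}$ at the point $y=\theta x$.

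Next I would insert the frame inequalities for $\{T_{i}\}_{i\in I}$ evaluated at $y=\theta x$, namely
\begin{equation*}
A\langle\theta x,\theta x\rangle_{\mathcal{A}}A^{\ast}\leq\sum_{i\in I}\langle T_{i}\theta Cx,T_{i}\theta C'x\rangle_{\mathcal{A}}\leq B\langle\theta x,\theta x\rangle_{\mathcal{A}}B^{\ast}.
\end{equation*}
The upper estimate is then immediate from Lemma \ref{haja3}: since $\langle\theta x,\theta x\rangle_{\mathcal{A}}\leq\|\theta\|^{2}\langle x,x\rangle_{\mathcal{A}}$ and conjugation by $B$ preserves order on positive elements, the right-hand side is at most $\|\theta\|^{2}B\langle x,x\rangle_{\mathcal{A}}B^{\ast}=(\|\theta\|B)\langle x,x\rangle_{\mathcal{A}}(\|\theta\|B)^{\ast}$, which gives the upper bound $\|\theta\|B$.

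For the lower estimate I would invoke the hypothesis that $\theta$ is injective with closed range. By Lemma \ref{haja2}(i), $\theta^{\ast}\theta$ is invertible with $\|(\theta^{\ast}\theta)^{-1}\|^{-1}I_{\mathcal{H}}\leq\theta^{\ast}\theta$, hence $\langle\theta x,\theta x\rangle_{\mathcal{A}}=\langle\theta^{\ast}\theta x,x\rangle_{\mathcal{A}}\geq\|(\theta^{\ast}\theta)^{-1}\|^{-1}\langle x,x\rangle_{\mathcal{A}}$. Conjugating this positive-element inequality by $A$ yields
\begin{equation*}
A\langle\theta x,\theta x\rangle_{\mathcal{A}}A^{\ast}\geq\|(\theta^{\ast}\theta)^{-1}\|^{-1}A\langle x,x\rangle_{\mathcal{A}}A^{\ast}=\big(\|(\theta^{\ast}\theta)^{-1}\|^{-\frac12}A\big)\langle x,x\rangle_{\mathcal{A}}\big(\|(\theta^{\ast}\theta)^{-1}\|^{-\frac12}A\big)^{\ast},
\end{equation*}
which is precisely the asserted lower bound $\|(\theta^{\ast}\theta)^{-1}\|^{-1/2}A$. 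Combining the two estimates establishes the claim.

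The one point requiring care—the main obstacle—is justifying that the conjugation map $a\mapsto AaA^{\ast}$ (and likewise by $B$) is order-preserving on $\mathcal{A}^{+}$ and interacts correctly with the nonnegative central scalars $\|\theta\|^{2}$ and $\|(\theta^{\ast}\theta)^{-1}\|^{-1}$; this follows because for $0\leq a$ one has $AaA^{\ast}=(Aa^{1/2})(Aa^{1/2})^{\ast}\geq0$, and because real scalars are central. I would also check at the outset that $C,C'$ commute with $(T_{i}\theta)^{\ast}(T_{i}\theta)=\theta^{\ast}T_{i}^{\ast}T_{i}\theta$, so that the frame operator $\theta^{\ast}S_{(C,C')}\theta$ is well defined in the sense of Proposition \ref{P1}; this is guaranteed by the commutation of $\theta$, and hence of $\theta^{\ast}$, with the positive operators $C,C'$ together with the standing assumption on $T_{i}^{\ast}T_{i}$.
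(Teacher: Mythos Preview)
Your argument is correct and follows essentially the same route as the paper: apply the original $(C,C')$-controlled $\ast$-operator frame inequality at the point $\theta x$, use Lemma~\ref{haja2}(i) to bound $\langle\theta x,\theta x\rangle_{\mathcal{A}}$ below by $\|(\theta^{\ast}\theta)^{-1}\|^{-1}\langle x,x\rangle_{\mathcal{A}}$, use Lemma~\ref{haja3} to bound it above by $\|\theta\|^{2}\langle x,x\rangle_{\mathcal{A}}$, and conjugate by $A$ and $B$ respectively; the frame-operator identification is the same computation as in the paper, merely placed at the beginning rather than the end. Your additional remarks on order-preservation of $a\mapsto AaA^{\ast}$ and on the commutation of $C,C'$ with $(T_{i}\theta)^{\ast}(T_{i}\theta)$ are sound and in fact make explicit points the paper leaves implicit.
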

\begin{proof}
	Let $\{T_{i}\}_{i\in I} \in End_{\mathcal{A}}^{\ast}(\mathcal{H})$ be a $(C,C^{'})$-controlled $\ast$-operator frame for $End_{\mathcal{A}}^{\ast}(\mathcal{H})$ with bounds A and B.
	Then
\begin{equation}\label{do6}
    A \langle \theta x,\theta x \rangle_{\mathcal{A}} A^{\ast}\leq \sum_{i\in I}\langle T_{i}C\theta x,T_{i}C^{'} \theta x\rangle_{\mathcal{A}}\leq B \langle \theta x,\theta x \rangle_{\mathcal{A}} B^{\ast}.
\end{equation}

	From lemma (\ref{haja2}), we have 
	$$\|(\theta^{\ast}\theta)^{-1}\|^{-1} \langle \ x, x \rangle_{\mathcal{A}}\leq \langle \theta x,\theta x \rangle_{\mathcal{A}},\,\,\, x \in {\mathcal{H}}.$$
	Hence 
\begin{equation}\label{do7}
    \|(\theta^{\ast}\theta)^{-1}\|^{\frac{-1}{2}} A \langle  x, x \rangle_{\mathcal{A}} (\|(\theta^{\ast}\theta)^{-1}\|^{\frac{-1}{2}}A)^{\ast}\leq A \langle \theta x,\theta x \rangle_{\mathcal{A}} A^{\ast}.
\end{equation}
    Since 
    $$\langle \theta x,\theta x \rangle_{\mathcal{A}}\leq \|\theta\|^2 \langle \theta x,\theta x \rangle_{\mathcal{A}},$$
	we have 
\begin{equation}\label{do8}
    B \langle \theta x,\theta x \rangle_{\mathcal{A}} B^{\ast} \leq\|\theta\| B \langle  x, x \rangle_{\mathcal{A}} (\|\theta\| B)^{\ast},\,\,\, x \in {\mathcal{H}}
\end{equation}
	Using (\ref{do6}), (\ref{do7}), (\ref{do8}) we have 
	$$\|(\theta^{\ast}\theta)^{-1}\|^{\frac{-1}{2}} A \langle  x, x \rangle_{\mathcal{A}} (\|(\theta^{\ast}\theta)^{-1}\|^{\frac{-1}{2}}A)^{\ast} \leq \sum_{i\in I}\langle T_{i}C \theta x,T_{i}C^{'} \theta x\rangle_{\mathcal{A}} \leq \|\theta\| B \langle  x, x \rangle_{\mathcal{A}} (\|\theta\| B)^{\ast},\,\,\, x \in {\mathcal{H}}. $$
	Therefore $\{T_{i}\theta\}_{i\in I}$ is a $(C,C^{'})$-controlled $\ast$-operator frame for $End_{\mathcal{A}}^{\ast}(\mathcal{H})$.\\
	Moreover for every $x \in {\mathcal{H}}$, we have 
	$$\theta^{\ast}S_{(C,C^{'})}\theta = \theta^{\ast}\sum_{i\in I}C^{'}T^{\ast}_{i}T_{i}C\theta x=\sum_{i\in I}\theta^{\ast}C^{'}T^{\ast}_{i}T_{i}C\theta x=\sum_{i\in I}C^{'}(T_{i} \theta)^{\ast}(T_{i}\theta )C x.$$
	This completes the proof.
\end{proof}
\begin{corollary}
	 Let $\{T_{i}\}_{i\in I} \in End_{\mathcal{A}}^{\ast}(\mathcal{H})$ be a $(C,C^{'})$-controlled $\ast$-operator frame for $End_{\mathcal{A}}^{\ast}(\mathcal{H})$, with operator frame $ S_{(C,C^{'})}$. Then  $\{T_{i}S_{(C,C^{'})}^{-1}\}_{i\in I}$ is a $(C,C^{'})$-controlled $\ast$-operator frame for $End_{\mathcal{A}}^{\ast}(\mathcal{H})$.
\end{corollary}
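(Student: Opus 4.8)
The plan is to derive this corollary as a direct application of Theorem \ref{do8}, taking the auxiliary operator $\theta$ there to be $\theta = S_{(C,C')}^{-1}$. To invoke that theorem I must verify its three hypotheses for this particular choice: that $S_{(C,C')}^{-1}$ is an adjointable operator on $\mathcal{H}$, that it is injective with closed range, and that it commutes with both $C$ and $C'$. Once these are in place, the conclusion of Theorem \ref{do8} gives exactly the assertion of the corollary.

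First I would record that by Proposition \ref{P1} the frame operator $S_{(C,C')}$ is bounded, positive, selfadjoint and invertible. Consequently its inverse exists and lies in $End_{\mathcal{A}}^{\ast}(\mathcal{H})$, so $\theta = S_{(C,C')}^{-1}$ is adjointable. Being invertible, $\theta$ is in particular injective and surjective; its range is all of $\mathcal{H}$ and is therefore closed. This settles the first two hypotheses of Theorem \ref{do8} with no further computation.

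The single step demanding real attention is the commutativity of $\theta$ with $C$ and $C'$. Here I would exploit the standing assumption that $C$ and $C'$ commute with each other and with each $T_i^{\ast}T_i$. Starting from $S_{(C,C')}x = \sum_{i\in I} C' T_i^{\ast}T_i C x$ and moving $C$ past each $T_i^{\ast}T_i$, one rewrites $S_{(C,C')} = C'C\sum_{i\in I} T_i^{\ast}T_i$, a product of factors each of which commutes with $C$ and with $C'$; hence $S_{(C,C')}$ commutes with $C$ and $C'$. Since an invertible operator and its inverse have the same commutant, $\theta = S_{(C,C')}^{-1}$ commutes with $C$ and $C'$ as well. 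I expect this commutativity verification to be the main obstacle, as it is the only place where the structural hypotheses on $C,C'$ are genuinely used.

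With all three hypotheses of Theorem \ref{do8} confirmed for $\theta = S_{(C,C')}^{-1}$, that theorem applies and yields directly that $\{T_i S_{(C,C')}^{-1}\}_{i\in I}$ is a $(C,C')$-controlled $\ast$-operator frame for $End_{\mathcal{A}}^{\ast}(\mathcal{H})$, completing the proof. Everything beyond the commutativity check is a routine invocation of the preceding result.
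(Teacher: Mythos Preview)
Your proposal is correct and follows exactly the paper's approach: the paper's proof is the single line ``The proof is a result of Theorem \ref{do8} for $\theta=S^{-1}$,'' and you have simply spelled out the verification of that theorem's hypotheses (adjointability, injectivity with closed range, and commutation with $C,C'$) in more detail than the paper does.
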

\begin{proof}
	The proof is a result of (\ref{do8}) for $\theta = S^{-1}$.

\end{proof}
\begin{theorem}
	Let $\{T_{i}\}_{i\in I} \in End_{\mathcal{A}}^{\ast}(\mathcal{H})$ be a $(C,C^{'})$-controlled $\ast$-operator frame for $End_{\mathcal{A}}^{\ast}(\mathcal{H})$ with bounds A and B. Let $\theta \in End_{\mathcal{A}}^{\ast}(\mathcal{H}) $ be surjective. Then $\{\theta T_{i}\}_{i\in I}$ is a $(C,C^{'})$-controlled $\ast$-operator frame for $End_{\mathcal{A}}^{\ast}(\mathcal{H})$   with bounds $A\|(\theta \theta^{\ast})^{-1}\|^{\frac{-1}{2}}$ , $B\|\theta\|$.
	
\end{theorem}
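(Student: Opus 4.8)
The plan is to mirror the argument of the preceding theorem, where the family $\{T_{i}\theta\}_{i\in I}$ was treated for an injective $\theta$ with closed range via Lemma \ref{haja2}(i); here the operator sits on the other side, so the natural substitute is Lemma \ref{haja2}(ii). Since $\theta$ is surjective, that lemma gives that $\theta\theta^{\ast}$ is invertible and
\[
\|(\theta\theta^{\ast})^{-1}\|^{-1} I_{\mathcal{H}} \le \theta\theta^{\ast} \le \|\theta\|^{2} I_{\mathcal{H}},
\]
which is the one structural input I would use to produce both new bounds. To bring the summands into a form to which Lemmas \ref{haja3} and \ref{haja4} apply, I would pass to the symmetrised expression $\sum_{i\in I}\langle T_{i}Cx,T_{i}C^{'}x\rangle_{\mathcal{A}}=\sum_{i\in I}\langle T_{i}(CC^{'})^{\frac{1}{2}}x,T_{i}(CC^{'})^{\frac{1}{2}}x\rangle_{\mathcal{A}}$, legitimate because $C$ and $C^{'}$ commute with each other and with $T_{i}^{\ast}T_{i}$, and write $z_{i}=T_{i}(CC^{'})^{\frac{1}{2}}x$, so that each term for the new family becomes $\langle\theta z_{i},\theta z_{i}\rangle_{\mathcal{A}}$.

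For the upper bound I would apply Lemma \ref{haja3} to $\theta$ in each summand, $\langle\theta z_{i},\theta z_{i}\rangle_{\mathcal{A}}\le\|\theta\|^{2}\langle z_{i},z_{i}\rangle_{\mathcal{A}}$, sum over $i\in I$, and then use the upper inequality for $\{T_{i}\}_{i\in I}$ to obtain
\[
\sum_{i\in I}\langle\theta T_{i}Cx,\theta T_{i}C^{'}x\rangle_{\mathcal{A}}\le\|\theta\|^{2}\sum_{i\in I}\langle T_{i}Cx,T_{i}C^{'}x\rangle_{\mathcal{A}}\le\|\theta\|^{2}B\langle x,x\rangle_{\mathcal{A}}B^{\ast}=(B\|\theta\|)\langle x,x\rangle_{\mathcal{A}}(B\|\theta\|)^{\ast}.
\]
This yields the upper bound $B\|\theta\|$ immediately and presents no real difficulty.

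The lower bound is where I expect the main obstacle. The goal is the dual of the lower half of the preceding theorem: to push $A\langle x,x\rangle_{\mathcal{A}}A^{\ast}\le\sum_{i\in I}\langle T_{i}Cx,T_{i}C^{'}x\rangle_{\mathcal{A}}$ through $\theta$ while absorbing the loss into the scalar $\|(\theta\theta^{\ast})^{-1}\|^{-\frac{1}{2}}$. The delicate point is that $\theta$ now acts on the outside of every frame element, so the estimate cannot be made summand by summand: by Lemma \ref{haja4} surjectivity of $\theta$ is equivalent to $\theta^{\ast}$ being bounded below, i.e. $\langle\theta^{\ast}x,\theta^{\ast}x\rangle_{\mathcal{A}}\ge\|(\theta\theta^{\ast})^{-1}\|^{-1}\langle x,x\rangle_{\mathcal{A}}$, whereas the quantity actually appearing is $\langle\theta z_{i},\theta z_{i}\rangle_{\mathcal{A}}$, and a surjective operator need not be bounded below. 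My plan is therefore to rewrite $\langle x,x\rangle_{\mathcal{A}}$ in terms of $\theta^{\ast}$ and the bounded inverse $(\theta\theta^{\ast})^{-1}$, so that the lower estimate for $\theta^{\ast}$ from Lemma \ref{haja2}(ii) can be combined with the lower frame inequality for $\{T_{i}\}_{i\in I}$ and the commutativity of $C$ and $C^{'}$. Confirming that this manipulation really delivers $\|(\theta\theta^{\ast})^{-1}\|^{-1}A\langle x,x\rangle_{\mathcal{A}}A^{\ast}$ as a lower bound for $\sum_{i\in I}\langle\theta T_{i}Cx,\theta T_{i}C^{'}x\rangle_{\mathcal{A}}$ is the crux, and the step I would scrutinise most carefully.
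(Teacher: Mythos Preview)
Your upper-bound argument is essentially the paper's: a termwise estimate $\langle\theta z_{i},\theta z_{i}\rangle_{\mathcal{A}}\le\|\theta\|^{2}\langle z_{i},z_{i}\rangle_{\mathcal{A}}$ followed by the upper frame inequality. For the lower bound, however, the paper does \emph{not} take your detour through $\theta^{\ast}$; it simply asserts, citing Lemma~\ref{haja2}, the companion termwise inequality
\[
\|(\theta\theta^{\ast})^{-1}\|^{-1}\langle T_{i}Cx,T_{i}C^{'}x\rangle_{\mathcal{A}}\le\langle\theta T_{i}Cx,\theta T_{i}C^{'}x\rangle_{\mathcal{A}},
\]
sums over $i$, and combines with the lower frame inequality for $\{T_{i}\}_{i\in I}$. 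That is exactly the step you flag as suspect, and your objection is well taken: Lemma~\ref{haja2}(ii) bounds $\theta\theta^{\ast}$ from below, not $\theta^{\ast}\theta$, and a merely surjective $\theta$ need not be bounded below. The paper does not address this.

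Your proposed alternative, rewriting $\langle x,x\rangle_{\mathcal{A}}$ through $\theta^{\ast}$ and $(\theta\theta^{\ast})^{-1}$, is left unexecuted, and I do not see how it can be completed: the obstacle is that $\theta$ sits between $T_{i}$ and its argument, and without a commutation hypothesis such as $\theta T_{i}=T_{i}\theta$ there is no mechanism for transporting a lower bound on $\theta^{\ast}$ into one on $\theta$ acting on $T_{i}(CC^{'})^{1/2}x$. So your proposal has a genuine gap at precisely the point you yourself single out; the paper's proof carries the same gap but hides it behind an unjustified appeal to Lemma~\ref{haja2}. If one tacitly strengthens the hypothesis to $\theta$ invertible, then Lemma~\ref{haja2}(i) yields $\theta^{\ast}\theta\ge\|(\theta^{\ast}\theta)^{-1}\|^{-1}I=\|(\theta\theta^{\ast})^{-1}\|^{-1}I$ and both the paper's termwise argument and your sketch go through immediately.
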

\begin{proof}
	From the definition of $(C,C^{'})$-controlled $\ast$-operator frame, we have 
\begin{equation}\label{do9}
    A\langle x,x\rangle _{\mathcal{A}} A^{\ast} \leq\sum_{i\in I}\langle T_{i}Cx,T_{i}C^{'}x\rangle_{\mathcal{A}}\leq B\langle x,x\rangle_{\mathcal{A}} B^{\ast},  x\in\mathcal{H}.
\end{equation}
    Using Lemma(\ref{haja2}), we have 
\begin{equation}\label{douae}
    \|(\theta \theta^{\ast})^{-1}\|^{-1} \langle T_{i}Cx,T_{i}C^{'}x\rangle_{\mathcal{A}} \leq \langle \theta T_{i}Cx,\theta T_{i}C^{'}x \rangle_{\mathcal{A}} \leq \|\theta\|^2 \langle T_{i}Cx,T_{i}C^{'}x\rangle_{\mathcal{A}}
\end{equation}  
	From (\ref{do9}) and (\ref{douae}), we have
	$$A\|(\theta \theta^{\ast})^{-1}\|^{\frac{-1}{2}}\langle  x, x \rangle_{\mathcal{A}}(A\|(\theta \theta^{\ast})^{-1}\|^{\frac{-1}{2}})^{\ast}\leq \sum_{i\in I}\langle \theta T_{i}Cx,\theta T_{i}C^{'}x\rangle \leq B \|\theta\|\langle x,x\rangle_{\mathcal{A}} (B\|\theta\| )^{\ast},  x\in\mathcal{H}.  $$
	Hence $\{\theta T_{i}\}_{i\in I}$ is a $(C,C^{'})$-controlled $\ast$-operator frame for $End_{\mathcal{A}}^{\ast}(\mathcal{H})$.
\end{proof}
    Under wich conditions a controlled $\ast$- operator frame for $End_{\mathcal{A}}^{\ast}(\mathcal{H})$ with $\mathcal{H}$ a $C^{\ast}$-module over a unital $C^{\ast}$-algebras $\mathcal{A}$ is also a controlled $\ast$- operator frame for $End_{\mathcal{A}}^{\ast}(\mathcal{H})$ with $\mathcal{H}$ a $C^{\ast}$-module over a unital $C^{\ast}$-algebras $\mathcal{B}$ .
    The following theorem answer this questions.
\begin{theorem}
	Let $(\mathcal{H}, \mathcal{A}, \langle .,.\rangle_{\mathcal{A}})$ and $(\mathcal{H}, \mathcal{B}, \langle .,.\rangle_{\mathcal{B}})$ be two hilbert $C^{\ast}$-modules and let $\varphi$: $\mathcal{A} \longrightarrow \mathcal{B}$ be a $\ast$-homomorphisme and $\theta$ be a map on $\mathcal{H}$ such that $\langle \theta x,\theta y\rangle_{\mathcal{B}}=\varphi( \langle x,y\rangle_{\mathcal{A}})  $ for all $x, y \in \mathcal{H} $. Suppose $\{T_{i}\}_{i\in I} \subset End_{\mathcal{A}}^{\ast}(\mathcal{H})$ is a $(C,C^{'})$-controlled $\ast$-operator frame for $(\mathcal{H}, \mathcal{A}, \langle .,.\rangle_{\mathcal{A}})$ with frame operator $S_{\mathcal{A}}$ and lower and upper bounds A and B respectively. If $\theta$ is surjective such that  $\theta T_i=T_i \theta$ for each $i \in I$ and  $\theta C= C \theta$ and $\theta C^{'}= C^{'} \theta$, then $\{T_{i}\}_{i\in I}$ is a $(C,C^{'})$-controlled $\ast$-operator frame for $(\mathcal{H}, \mathcal{B}, \langle .,.\rangle_{\mathcal{B}})$ with frame operator $S_{\mathcal{B}}$  and lower and upper  bounds $\varphi (A)$,  $\varphi (B)$ respectively and  $\langle S_{\mathcal{B}}\theta x, \theta y\rangle_{\mathcal{B}}= \varphi ( \langle S_{\mathcal{A}} x,  y\rangle_{\mathcal{A}}) $.
	
\end{theorem}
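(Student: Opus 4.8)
The plan is to transport the defining inequality from $\mathcal{A}$ to $\mathcal{B}$ by applying the $\ast$-homomorphism $\varphi$ term by term and then using the intertwining identity $\langle \theta x,\theta y\rangle_{\mathcal{B}}=\varphi(\langle x,y\rangle_{\mathcal{A}})$ together with the surjectivity of $\theta$. First I would start from the hypothesis that $\{T_i\}_{i\in I}$ is a $(C,C')$-controlled $\ast$-operator frame for $(\mathcal{H},\mathcal{A},\langle .,.\rangle_{\mathcal{A}})$, i.e.
\begin{equation*}
A\langle x,x\rangle_{\mathcal{A}}A^{\ast}\leq\sum_{i\in I}\langle T_iCx,T_iC'x\rangle_{\mathcal{A}}\leq B\langle x,x\rangle_{\mathcal{A}}B^{\ast},\qquad x\in\mathcal{H}.
\end{equation*}
Applying $\varphi$ and invoking Lemma \ref{haja1} (a $\ast$-homomorphism is order preserving) keeps both inequalities. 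Since $\varphi$ is multiplicative and $\ast$-preserving, $\varphi(A\langle x,x\rangle_{\mathcal{A}}A^{\ast})=\varphi(A)\varphi(\langle x,x\rangle_{\mathcal{A}})\varphi(A)^{\ast}$, and likewise for the upper bound.

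Next I would replace the $\mathcal{A}$-inner products by $\mathcal{B}$-inner products. The identity $\langle \theta x,\theta x\rangle_{\mathcal{B}}=\varphi(\langle x,x\rangle_{\mathcal{A}})$ turns the outer terms into $\varphi(A)\langle \theta x,\theta x\rangle_{\mathcal{B}}\varphi(A)^{\ast}$ and $\varphi(B)\langle \theta x,\theta x\rangle_{\mathcal{B}}\varphi(B)^{\ast}$. For the middle term I would pull $\varphi$ inside the series — legitimate because a $\ast$-homomorphism between $C^{\ast}$-algebras is norm contractive, hence norm continuous, and the series converges in norm by the standing assumption — to get $\sum_{i\in I}\varphi(\langle T_iCx,T_iC'x\rangle_{\mathcal{A}})=\sum_{i\in I}\langle \theta T_iCx,\theta T_iC'x\rangle_{\mathcal{B}}$. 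Using $\theta T_i=T_i\theta$, $\theta C=C\theta$ and $\theta C'=C'\theta$, each summand becomes $\langle T_iC\theta x,T_iC'\theta x\rangle_{\mathcal{B}}$.

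At this stage the chain of inequalities reads, for every $x\in\mathcal{H}$,
\begin{equation*}
\varphi(A)\langle \theta x,\theta x\rangle_{\mathcal{B}}\varphi(A)^{\ast}\leq\sum_{i\in I}\langle T_iC\theta x,T_iC'\theta x\rangle_{\mathcal{B}}\leq\varphi(B)\langle \theta x,\theta x\rangle_{\mathcal{B}}\varphi(B)^{\ast}.
\end{equation*}
Because $\theta$ is surjective, every $z\in\mathcal{H}$ has the form $z=\theta x$, so the above is precisely the $(C,C')$-controlled $\ast$-operator frame inequality in $(\mathcal{H},\mathcal{B},\langle .,.\rangle_{\mathcal{B}})$ with bounds $\varphi(A)$ and $\varphi(B)$. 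Finally, for the frame operator identity I would expand $\langle S_{\mathcal{B}}\theta x,\theta y\rangle_{\mathcal{B}}=\sum_{i\in I}\langle T_iC\theta x,T_iC'\theta y\rangle_{\mathcal{B}}$, move $\theta$ outward via the commutation relations, apply the intertwining identity to rewrite each term as $\varphi(\langle T_iCx,T_iC'y\rangle_{\mathcal{A}})$, and pull $\varphi$ back out of the norm-convergent sum to obtain $\varphi(\langle S_{\mathcal{A}}x,y\rangle_{\mathcal{A}})$.

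The routine algebra is straightforward; the one step that genuinely needs care is the interchange of $\varphi$ with the infinite series, which rests on the contractivity (hence continuity) of a $\ast$-homomorphism of $C^{\ast}$-algebras together with the convention that the controlled frame series converge in norm. A secondary point worth stating explicitly is that surjectivity of $\theta$ is exactly what upgrades the estimate from vectors $\theta x$ in the range of $\theta$ to all of $\mathcal{H}$, so this hypothesis cannot be omitted.
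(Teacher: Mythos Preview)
Your argument is correct and follows essentially the same route as the paper's own proof: apply $\varphi$ to the frame inequality, use Lemma~\ref{haja1} and multiplicativity to split the bounds, convert to $\mathcal{B}$-inner products via $\langle \theta x,\theta y\rangle_{\mathcal{B}}=\varphi(\langle x,y\rangle_{\mathcal{A}})$, commute $\theta$ past $T_i,C,C'$, and invoke surjectivity. Your version is in fact slightly more careful than the paper's, since you explicitly justify the interchange of $\varphi$ with the norm-convergent series via continuity of $\ast$-homomorphisms, a point the paper leaves implicit.
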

\begin{proof}
	Since $\theta $ is surjective, then for every $y \in \mathcal{H}$ there exists $x \in \mathcal{H}$ such that $\theta x= y$. Using  the definition of $(C,C^{'})$-controlled $\ast$-operator frame we have,
	$$ A\langle x,x\rangle _{\mathcal{A}} A^{\ast} \leq\sum_{i\in I}\langle T_{i}Cx,T_{i}C^{'}x\rangle\leq B\langle x,x\rangle_{\mathcal{A}} B^{\ast},  x\in\mathcal{H}.$$
	By lemma (\ref{haja1}) we have 
	$$\varphi (A\langle x,x\rangle _{\mathcal{A}} A^{\ast} )\leq \varphi (\sum_{i\in I}\langle T_{i}Cx,T_{i}C^{'}x\rangle_{\mathcal{A}})\leq \varphi (B\langle x,x\rangle_{\mathcal{A}} B^{\ast}),  x\in\mathcal{H}.$$
	From the definition of $\ast$-homomorphisme we have 
	$$\varphi (A) \varphi (\langle x,x\rangle _{\mathcal{A}}) \varphi( A^{\ast} )\leq \varphi (\sum_{i\in I}\langle T_{i}Cx,T_{i}C^{'}x\rangle_{\mathcal{A}})\leq \varphi (B) \varphi (\langle x,x\rangle_{\mathcal{A}}) \varphi ( B^{\ast}),  x\in\mathcal{H}.$$
	Using the relation betwen $\theta$ and $\varphi $ we get 
	$$\varphi (A) \langle \theta x,\theta x\rangle _{\mathcal{B}} (\varphi( A))^{\ast} \leq \sum_{i\in I}\langle \theta T_{i}Cx,\theta T_{i}C^{'}x\rangle_{\mathcal{B}}\leq \varphi (B)  \langle \theta x, \theta x\rangle_{\mathcal{B}}) (\varphi ( B))^{\ast},  x\in\mathcal{H}.$$
	Since $\theta T_i=T_i \theta$  ,  $\theta C= C \theta$ and $\theta C^{'}= C^{'} \theta$ we have 
	$$\varphi (A) \langle \theta x,\theta x\rangle _{\mathcal{B}} (\varphi( A))^{\ast} \leq \sum_{i\in I}\langle  T_{i} C\theta x, T_{i} C^{'}\theta x\rangle_{\mathcal{B}}\leq \phi (B)  \langle \theta x, \theta x\rangle_{\mathcal{B}}) (\varphi ( B))^{\ast},  x\in\mathcal{H}.$$
	Therefore 
	$$\varphi (A) \langle y,y \rangle _{\mathcal{B}} (\varphi( A))^{\ast} \leq \sum_{i\in I}\langle  T_{i} C y, T_{i} C^{'}y\rangle_{\mathcal{B}}\leq \varphi (B)  \langle y, y\rangle_{\mathcal{B}}) (\varphi ( B))^{\ast},  y\in\mathcal{H}.$$
	This implies that $\{T_{i}\}_{i\in I}$ is a $(C,C^{'})$-controlled $\ast$-operator frame for $(\mathcal{H}, \mathcal{B}, \langle .,.\rangle_{\mathcal{B}})$ with bounds $\varphi (A)$ and $\varphi (B)$. 
	Moreover we have 
\begin{align*}
    \varphi ( \langle S_{\mathcal{A}} x,  y\rangle_{\mathcal{A}}&= \varphi ( \langle \sum_{i\in I}  T_{i}Cx, T_{i}C^{'}y \rangle_{\mathcal{A}})\\
    &=  \sum_{i\in I}\varphi ( \langle T_{i}Cx, T_{i}C^{'}y \rangle_{\mathcal{A}})\\
    &=\sum_{i\in I} \langle \theta T_{i}Cx, \theta T_{i}C^{'}y \rangle_{\mathcal{B}}\\
    &= \sum_{i\in I} \langle  T_{i}C\theta x,  T_{i}C^{'}\theta y \rangle_{\mathcal{B}}\\
    &= \langle \sum_{i\in I}C^{'} T_{i}^{\ast} T_{i}C\theta x,  \theta y \rangle_{\mathcal{B}}\\
    &=\langle S_{\mathcal{B}} \theta x,  \theta y\rangle_{\mathcal{A}}).
\end{align*}
	Which completes the proof.
\end{proof}

   ACKNOWLEDGMENTS\\
   Sincere thanks goes to the valuable comments of the referees and the editors that give a step forward to the main file of the manuscript. Also the authors would like to thanks the reviewers for their valuable comments.

\bibliographystyle{amsplain}

\end{document}